\newtheorem{thm}{Theorem}[section]
\newtheorem{propo}[thm]{Proposition}
\newtheorem{lem}[thm]{Lemma}
\newtheorem{cor}[thm]{Corollary}
\theoremstyle{definition}
\newtheorem{de}[thm]{Definition}
\newtheorem{ntc}[thm]{Notation}
\newtheorem{ex}[thm]{Example}
\theoremstyle{remark}
\newtheorem{rmk}[thm]{Remark}
\numberwithin{equation}{section}
\newcommand{\CC}{\mathds{C}}
\newcommand{\KK}{\mathds{K}}
\newcommand{\RR}{\mathds{R}}
\newcommand{\NN}{\mathds{N}}
\newcommand{\ZZ}{\mathds{Z}}
\newcommand{\FF}{\mathds{F}}
\newcommand{\PP}{\mathds{P}}
\newcommand{\TT}{\mathds{T}}
\newcommand{\BB}{\mathds{B}}
\newcommand{\A}{\mathcal{A}}
\renewcommand{\P}{\mathcal{P}}
\newcommand{\V}{\mathcal{V}}
\newcommand{\C}{\mathcal{C}}
\newcommand{\M}{\mathcal{M}}
\newcommand{\N}{\mathcal{N}}
\newcommand{\T}{\mathcal{T}}
\newcommand{\U}{\mathcal{U}}
\renewcommand{\L}{\mathcal{L}}
\newcommand{\p}{\pi_1}
\newcommand{\inv}{^{-1}}
\newcommand{\set}[1]{\left\{ #1 \right\}}
\renewcommand{\epsilon}{\varepsilon}
\renewcommand{\hat}{\widehat}
\renewcommand{\tilde}{\widetilde}
\DeclareMathOperator{\Hom}{Hom}
\DeclareMathOperator{\HH}{H}
\DeclareMathOperator{\corank}{corank}
\DeclareMathOperator{\im}{Im}
\DeclareMathOperator{\coker}{coker}
\DeclareMathOperator{\sgn}{sgn}
\DeclareMathOperator{\DEPTH}{depth}
\DeclareMathOperator{\depth}{\overline{depth}}
\DeclareMathOperator{\supp}{supp}
\DeclareMathOperator{\Tub}{Tub}
\DeclareMathOperator{\Braid}{B}
\newcommand{\ncross}[2]{\draw[thick,cap=round](#2,#1)--(1+#2,#1);}
\newcommand{\ocross}[2]{\draw[thick,cap=round](#2,#1)--(1+#2,1+#1); \draw[thick,color=white,line width=6pt] (0.2+#2,1+#1)--(0.8+#2,#1); \draw[thick,cap=round](#2,1+#1)--(1+#2,#1);}
\newcommand{\ucross}[2]{\draw[thick,cap=round](#2,1+#1)--(1+#2,#1); \draw[thick,color=white,line width=6pt] (0.2+#2,#1)--(0.8+#2,1+#1); \draw[thick,cap=round](#2,#1)--(1+#2,1+#1);}
\newcommand{\rcross}[2]{\draw[thick,cap=round](#2,1+#1)--(1+#2,#1); \draw[thick,cap=round](#2,#1)--(1+#2,1+#1);}
\newcommand{\mcross}[3]{\foreach \k in {1,2,...,#2} {\draw[thick,cap=round](#3,-1+\k+#1)--(1+#3,-\k+#1+#2);} }
\begin{document}

\title{A topological invariant of line arrangements}

\author[E. Artal]{E. Artal Bartolo}
\address{Departamento de Matem{\'a}ticas-IUMA, Universidad de Zaragoza, 50009 Zaragoza SPAIN }
\email{artal@unizar.es}
\author{V. Florens}
\address{LMA, UMR CNRS 5142 Universite de Pau et des Pays de l'Adour
64000 Pau FRANCE}
\email{vincent.florens@univ-pau.fr}
\thanks{First named author is partially supported by
MTM2010-2010-21740-C02-02 and Grupo Consolidado Geometr{\'i}a E15; two last named authors
are partially supported by ANR Project Interlow ANR-09-JCJC-0097-01}
\author{B. Guerville-Ball{\'e}}
\address{LMA, UMR CNRS 5142 Universite de Pau et des Pays de l'Adour
64000 Pau FRANCE}
\email{benoit.guerville@univ-pau.fr}

\subjclass[2010]{32S22, 52C35, 55N25, 57M05}

\begin{abstract}
We define a new topological invariant of line arrangements in the complex projective plane. 
This invariant is a root of unity defined under some combinatorial restrictions for arrangements endowed with 
some special torsion character on the fundamental group of their complements. 
It is derived from the peripheral structure  on the group induced by the inclusion map of 
the boundary of a tubular neigborhood in the exterior of the arrangement. 
By similarity with knot theory, it can be viewed as an analogue of linking numbers. 
This is an orientation-preserving invariant for ordered arrangements. 
We give an explicit method to compute the invariant from the equations of the arrangement, 
by using wiring diagrams introduced by Arvola, that encode the braid monodromy.
Moreover, this invariant is a crucial ingredient to compute the depth of a character 
satisfying some resonant conditions, and complete the existent methods by Libgober and the first author.
Finally, we compute the invariant for extended MacLane arrangements with an additional line 
and observe that it takes different values for the deformation classes.
\end{abstract}
\maketitle

\section*{Introduction}\label{Section_Introduction}

The influence of the combinatorial data on the topology of a 
projective line arrangement is not at present well understood. 
In the eighties, Orlik and Solomon \cite{OrlSol} showed that the cohomology 
ring of the complement of an arrangement is determined by the description of the 
incidence relations between the multiple points. 
This is not true for the deformation classes, as it was shown for MacLane combinatorics \cite{Mac,matroids}.
Rybnikov~\cite{Ryb,ry:11,ArtCarCogMar_Invariants} 
constructed a pair of complex line arrangements with the same combinatorics 
but whose fundamental groups are not isomorphic. 
This illustrates that the combinatorics of an arrangement $\A$ does 
not determine in general the homeomorphism type of the pair $(\CC \PP^2, \A)$. 
Other examples were exhibited by the first author, Carmona, Cogolludo and Marco 
by using the braid monodromy \cite{ArtCarCogMar_Topology}.

One of the strongest invariant of the topology of an arrangement is
the fundamental group of its complement, which can be computed using Zariski-van~Kampen method,
see also the specific approach by Arvola~\cite{Arv}. Even when the fundamental
group can be computed, it is very difficult to handle directly with.
This is why a number of invariants derived from the group have emerged, 
as the Alexander invariant and the characteristic varieties. These invariants can
be computed from the fundamental group, but the task can be endless for present computers
for most arrangements. A weaker invariant is the one-variable Alexander polynomial
(also known as characteristic polynomial of the monodromy of the Milnor fiber)
which can be computed directly from the fundamental group or by a general method
available for any projective curve. Though there are some partial results, 
it is still unknown whether characteristic varieties or Alexander polynomial are combinatorially determined.

Using a method by Ligboger~\cite{Lib_Characteristic} it is possible to compute \emph{most} irreducible
components of characteristic varieties (only some isolated points may fail to be found). A method
to compute these extra components can be found in~\cite{Art_Singularities}.

The \emph{boundary manifold} $B_{\A}$, defined as the common boundary of a regular 
neighbourhood of $\A$ and its exterior $E_\A$, is a graph $3$-manifold whose structure 
is determined by the combinatorics of $\A$. In the present paper, we construct 
a new topological invariant of line arrangements derived from the 
\emph{peripheral structure} $\pi_1(B_\A) \rightarrow \pi_1(E_{\A})$, induced by the inclusion map,
and previously studied in~\cite{FloGueMar}.  
This invariant is a root of unity defined for triples composed by an arrangement, a torsion character in $\CC^*$
and a cycle in the incidence graph of the arrangement; there are combinatorial restrictions for
the availability of this invariant, in particular the cycle must be non trivial and satisfy some resonant conditions. 
As it was remarked in \cite{Gue_Thesis}, it is in fact extracted from the homological reduction 
$i_*: H_1(B_\A) \rightarrow H_1(E_\A)$, which is a more tractable object, and corresponds to the value of the 
character on certain homology classes of the boundary manifold, viewed in $E_\A$ throught the inclusion map. 
This construction has similarities with knot theory, and the invariant is a sort of analogue of linking numbers.
As we state in the first main result,
Theorem~\ref{Theorem_Invariant}, it is an \emph{ordered-oriented} topological invariant.

As a second main contribution of this work, we give an explicit method to compute the invariant, 
in terms of braid monodromy. 
We use \emph{braided wiring diagrams} introduced by Arvola~\cite{Arv} (see also Suciu-Cohen~\cite{CohSuc_Monodromy}) 
to encode the braid monodromy relative to a generic projection of the arrangement. 
Note that this invariant is most probably of algebraic nature, even though our computations are topological.

It appears that this invariant is crucial for the computation of the
\emph{quasi-projective} depth of a (torsion) character in~\cite{Art_Singularities}.
The knowledge of depths of all characters
is equivalent to the knowledge of characteristic varieties;
the depth can be decomposed into a \emph{projective term} and a \emph{quasi-projective term}, 
vanishing for characters that ramify along all the lines. 
An algorithm to compute the projective part was given by Libgober
(see \cite{Lib_Characteristic} and also \cite{Art_Singularities} for details). 
An explicit way to compute the quasi-projective depth of resonant (torsion) characters
is given in~\cite{Art_Singularities}, and it happens that the invariant in this paper
is crucial for that method.
Hence, our invariant may help to find examples of combinatorially equivalent arrangements with different structure
for their characteristic varieties, though we have failed till now in finding such arrangements.
But as we show in this paper, this invariant is interesting in its own.

We compute the invariant for MacLane arrangements with an additional line, 
and observe that it takes different values for the two deformation classes. 
This shows that it provides information on their topologies, not contained in the combinatorics.
In particular, there is no ordered-oriented homeomorphism between both realizations; note that
this fact is a consequence of the same result for MacLane arrangements (as shown by Rybnikov).

In a forthcoming paper, the third author present new examples of \emph{Zariski pairs} of arrangements with $12$ lines
(of different nature of previous examples), where this invariant is the unique known one to show that they have not the same topological type.

In \S\ref{Section_InnerCyclicTriplet} we introduce the combinatorial and topological objects
to be used in the paper, namely the concept of inner-cyclic triplets. In \S\ref{Section_Invariant},
we define the invariant of the realization of an inner-cyclic triplet and we prove its topological invariance.
In \S\ref{Section_CharacteristicVarieties}, the relationship of the invariant with
characteristic varieties is described. The topological computation of the invariant via
wiring diagrams and the results of~\cite{FloGueMar} is in \S\ref{Section_Computability}.
Finally, \S\ref{Section_Examples} computes the invariant for the two ordered
realizations of the extended MacLane combinatorics.

\section{Inner cyclic triplets}\label{Section_InnerCyclicTriplet}

\subsection{Combinatorics}
\mbox{}

In what follows, a brief reminder on line combinatorics is given, see~\cite{ArtCarCogMar_Topology} 
for details. We also introduce the notion of \emph{inner cyclic}.

\begin{de}
	A \emph{combinatorial type}, or simply a (\emph{line}) \emph{combinatorics}, 
is a couple $\C=(\L,\P)$, where $\L$ is a finite set and $\P \subset \P(\L)$, satisfying that:
	\begin{itemize}
		\item For all $p \in \P,\ \sharp p \geq 2$;
		\item For any $\ell_1,\ell_2 \in \L,\ \ell_1\neq\ell_2,
\ \exists ! p\in \P$ such that $\ell_1,\ell_2\in p$.
	\end{itemize}
An \emph{ordered combinatorics} $\C$ is a combinatorics where $\L$ is an ordered set.
\end{de}

This notion encodes the intersection pattern of a collection of lines
in a projective planes, see~\S\ref{subsec:real}, where the relation~$\in$ corresponds
to the dual plane. There are several ways to encode
a line combinatorics.

\begin{de}\label{Definition_IncidenceGraph}
	 The \emph{incidence graph} $\Gamma_{\C}$ of a line combinatorics $\C=(\L,\P)$ 
is a non-oriented bipartite graph where the set of vertices $V(\C)$ decomposes as $V_P(\C)\amalg V_L(\C)$, with:
	\begin{equation*}
		V_P(\C)=\{v_p\mid p\in \P\},\quad \text{and}\quad
		V_L(\C)=\{v_\ell\mid \ell\in \L\}.
	\end{equation*}
	The vertices of $V_P(\C)$ are called \emph{point-vertices} and those of $V_L(\C)$ 
are called \emph{line-vertices}.
	An edge of $\Gamma_{\C}$ joins $v_\ell$ to $v_p$ if and only if $\ell\in p$. 
Such an edge is denoted by $e(\ell,p)$.
\end{de}

For a line arrangement in the projective plane the incidence graph is the dual graph of the divisor
obtained by the preimage of the line arrangement in the blowing-up of the projective plane along
the set of multiples points of the arrangement, see~\S\ref{subsec:real}.

\begin{de}
A \emph{character} on a line combinatorics $(\L,\P)$ 
is a map $\xi:\L\rightarrow \CC^*$ such that 
\begin{equation}\label{eq:prod_total}
\prod_{\ell\in\L} \xi(\ell)=1.
\end{equation}
A \emph{torsion character} on a line combinatorics $(\L,\P)$ 
is a character $\xi$ where for all $\ell\in\L$, $\xi(\ell)$ is a root of unity.
\end{de}

Namely, we are associating a non-zero complex number to each element of $\L$, such that the product
of all of them equals~$1$. These characters have a cohomological meaning for line arrangements in the complex
projective plane. 

\begin{de}\label{def:car_point}
Let $\xi$ be a character on a line combinatorics $\C=(\L,\P)$.
For each $p\in\P$, we define  $\xi(p):=\prod_{\ell\in p} \xi(\ell)$.
\end{de}

A \emph{cycle} of $\Gamma_\C$ is an element of $H_1(\Gamma_\C)$. Finally, we introduce the main object of this work.

\begin{de}\label{Definition_InnerCyclic}
	An \emph{inner cyclic triplet} $(\C,\xi,\gamma)$ is a line combinatorics $\C=(\L,\P)$, 
a torsion character $\xi$ on $\L$ and a cycle $\gamma$ of $\Gamma_\C$ such that:
	\begin{enumerate}
		\item\label{Definition_InnerCyclic:1} for all line-vertex $v_\ell$ of $\gamma$, $\xi(\ell)=1$,
		\item\label{Definition_InnerCyclic:2} for all point-vertex $v_p$ of $\gamma$, and for all $\ell\in p$, $\xi(\ell)=1$,
		\item\label{Definition_InnerCyclic:3} for all $p\in\ell$, with $v_\ell\in\gamma$, $\xi(p)=1$.
	\end{enumerate}
\end{de}

The above conditions can be understood in a shorter way: all the vertices of $\gamma$ and all their
neighbors come from elements $m\in V(\C)$ such that $\chi(m)=1$.

\subsection{Realization}\label{subsec:real}\mbox{}

Let $\A$ be a line arrangement in $\CC\PP^2\equiv\PP^2$ and let $\P_\A$ be the set of multiple points
of $\A$; then $\C_\A:=(\A,\P_\A)$ is the \emph{combinatorics} of $\A$. Given a combinatorics
$\C=(\L,\P)$, a \emph{complex realization} of $\C$
is a line arrangement $\A$ in $\PP^2$ such that its combinatorics agrees with $\C$.
An \emph{ordered realization} of an ordered combinatorics is defined accordingly.
The existence of realizations of a combinatorics depends on the field $\KK$. 

Given a line arrangement~$\A$ in $\PP^2$, or more generally a set of irreducible curves
$\A$ in a projective surface~$X$, we will denote $\bigcup\A$ the union of those curves.

\begin{rmk}\label{rmk:realization}
Let $\A$ be a line arrangement in $\PP^2$ with combinatorics $\C=(\L,\P)$ (note that $\L=\A$).
For $\ell\in\L$ and $p\in\P$ the notion $\ell\in p$ can be understood in the dual plane $\check{\PP}^2$.
For convenience we also use the notation $p\in \ell$. Let $\pi:\hat{\PP}^2\to\PP^2$ be the composition
of the blow-ups of the points in $\P$; then $\tilde{\A}:=\pi^{-1}(\A)$ defines a normal crossing
divisor in $\hat{\PP}^2$ whose dual graph is exactly $\Gamma_\C$. Note also that 
$M_\A:=\PP^2\setminus\bigcup\A=\hat{\PP}^2\setminus\bigcup\tilde{\A}$. Let us identify
each $v_\ell\in V_L(\C)$ 
with a meridian of $\ell$ as an element in $H_1(M_\A;\ZZ)$. Note that
$V_L(\C)$ generates $H_1(M_\A;\ZZ)$ and the only relation satisfied by them is~\eqref{eq:prod_total}.
Moreover, if $v_p\in V_P(\C)$ is identified with a meridian of $\pi^{-1}(P)$ as element
of $H_1(M_\A;\ZZ)$, then, the equality
\begin{equation}\label{eq:mer_pt}
v_p=\prod_{p\in\ell} v_\ell=\prod_{\ell\in p} v_\ell
\end{equation}
holds. Via this identification the space of characters of $\C$ coincides with
$$
H^1(M_\A;\CC^*)=\Hom(H_1(M_\A;\ZZ),\CC^*)\cong(\CC^*)^{\sharp\A-1}.
$$
Equation \eqref{eq:mer_pt} agrees with Definition~\ref{def:car_point}.
\end{rmk}

The space $M_\A$ is not compact and this may cause some trouble. To avoid this complication,
let $\Tub(\A)$ be a compact regular neighbourhood of $\A$ in $\PP^2$, and 
let $E_\A:=\overline{\PP^2\setminus \Tub(\A)}$ be the \emph{exterior} of the arrangement.
This is an oriented $4$-manifold with boundary such that the inclusion
$E_\A\hookrightarrow M_\A$ is a homotopy equivalence
and in particular $H_1(E_\A;\ZZ)\equiv H_1(M_\A;\ZZ)$.
As we have seen, if $\sharp\A=n+1$, then $H_1(E_\A;\ZZ)\cong\ZZ^n$ and
it is freely generated by the meridians of any subset of $n$ lines in $\A$.

\begin{ntc}
We use the notation $\Gamma_\A$ for the incidence graph of $\Gamma_\C$.
\end{ntc}

\begin{de}
	Let $(\C,\xi,\gamma)$ be a triplet, where $\C$ is an ordered combinatorics,
$\xi$ a character and $\gamma$ a circular cycle of $\Gamma_\C$. A \emph{realization} of $(\C,\xi,\gamma)$ is
a triplet $(\A,\xi_\A,\gamma_\A)$:
	\begin{itemize}
		\item An ordered realization $\A$ of $\C$;
		\item A character $\xi_\A:\HH_1(E_\A;\ZZ)\rightarrow\CC^*$ such that $\xi_\A(v_\ell)=\xi(\ell)$
under the identification of Remark~\ref{rmk:realization}.
		\item A cycle $\gamma_\A$ in $\Gamma_\A$ which coincides with $\gamma$ via the natural
identification $\Gamma_\A\equiv\Gamma_\C$.
	\end{itemize}
Due to this natural identifications we usually drop the subindex $\A$.
If $(\C,\xi,\gamma)$ is an inner cyclic triplet, then the realization $(\A,\xi,\gamma)$ is \emph{inner cyclic}.
\end{de}

\subsection{Regular neighborhoods}\mbox{} 

Let $\A$ be a line arrangement with combinatorics $\C$. Let us describe how to construct
a compact regular neighbourhood $\Tub(\A)$. There are several ways
to define it, see \cite{CohSuc_Boundary,Dur}, and they produce isotopic results.

For each $\ell\in\A$ we consider
a tubular neighborhood $\Tub(\ell)$ of $\ell\subset\PP^2$ and for each $p\in\P$ we consider
a closed $4$-ball $\BB_p$ centered at $p$.

\begin{de}
We say that the set $\{\Tub(\ell)\mid \ell\in\A\}\cup\{\BB_p\mid p\in\P\}$ is a
\emph{compatible system of neighborhoods in $\A$} if $\forall \ell_1\neq\ell_2$ we have
$$
\Tub(\ell_1)\cap\Tub(\ell_2)=\mathring{\BB}_p,\quad p=\ell_1\cap\ell_2
$$
and the balls are pairwise disjoint.
The union of these neighborhoods is a \emph{regular neighborhood of} $\A$.
Given such a system, for each $\ell$ we define the \emph{holed neighborhood} 
$$
\mathcal{N}(\ell):=\overline{\Tub(\ell)\setminus\bigcup_{p\in\ell} \BB_p}.
$$
\end{de}

For each $\ell\in\A$ we denote by
$$
\check{\ell}:=\overline{\ell\setminus\bigcup_{p\in\ell} \BB_p}.
$$
This is a punctured sphere (which as many punctures as the number of multiple points in $\ell$)
and the space $\mathcal{N}(\ell)$ is (non-naturally) homeomorphic to $\check{\ell}\times D^2$,
where $D^2$ is a closed disk in $\CC$.

Let us consider now $\pi:\hat{\PP}^2\to\PP^2$. For each $p\in\P$, $\pi^{-1}(\BB_p)$ is a tubular
neighborhood of the rational curve $E_p:=\pi^{-1}(p)$; this is a locally trivial $D^2$-bundle, but not trivial. 
Note that $\pi^{-1}(\mathcal{N}(\ell))$ is naturally isomorphic
to $\mathcal{N}(\ell)$ and that $\Tub(\tilde{\A}):=\pi^{-1}(\Tub(\A))$ is a regular neighborhood of
$\tilde{\A}$ obtained by plumbing the tubular neighborhood of its
irreducible components.

If $p\in\ell$ then $V_{\ell,p}:=\mathcal{N}(\ell)\cap\BB_p$ is a tubular neighborhood of the trivial knot
$\ell\cap\partial\BB_p\subset\partial\BB_p$, i.e. a solid torus.

The manifolds $E_\A$ and $\Tub(\A)$ share their boundaries $B_\A=E_\A\cap\Tub(\A)$ and note also that
$B_\A$ can be identified with the boundary of $\Tub(\tilde{\A})$. This manifold
is a graph manifold obtained by gluing the following pieces:
\begin{equation*}
B_\ell:=\overline{\partial\N(\ell)\setminus\bigcup_{p\in\ell} V_{\ell,p}},\qquad
B_p:=\overline{\partial\BB_p\setminus\bigcup_{\ell\in p} V_{\ell,p}}.
\end{equation*}
Note also that $B_\A\hookrightarrow\Tub(\A)\setminus\bigcup\A$ is a homotopy equivalence.

\subsection{Nearby cycles}\mbox{} 

Let $\A$ be an arrangement, and $\gamma$ be a circular cycle of $\Gamma_\A$.  
The \emph{support} of $\gamma$ is defined as:
\begin{equation*}
	\supp(\gamma)=\set{\ell\in\A\ \mid\ v_\ell\in\gamma}=\set{\ell_1,\dots,\ell_r}.
\end{equation*}
with cyclic order $\ell_1<\dots<\ell_r$ and $\ell_{r+1}:=\ell_1$. Let
$p_j:=\ell_j\cap\ell_{j+1}$, $j=1,\dots,r$.

\begin{de}\label{def:embedding}
An \emph{embedding} of $\gamma$ in $\A$ is a simple closed loop $r(\gamma)\subset\bigcup\A$
defined as follows. Take a point $q_j\in\check{\ell}_j$ ($q_{r+1}:=q_1$), $j=1,\dots,r$.
We denote by $p_j^\alpha$ a point in $\ell_j\cap\BB_{p_j}$
and by $p_j^\omega$ a point in $\ell_{j+1}\cap\BB_{p_j}$. Let $\rho_j^\alpha$ be a radius
in $\ell_j$ from $p_j$ to $p_j^\alpha$ and $\rho_j^\omega$ be a radius
in $\ell_{j+1}$ from $p_j$ to $p_j^\omega$.
Pick up arbitrary simple paths
$\alpha_j$ from $q_j$ to $p_j^\alpha$ in $\check{\ell}_j$, $j=1,\dots,r$, and
$\omega_j$ from $p_{j-1}^\omega$ to $q_j$, $j=2,\dots,r+1$.
Then:
$$
r(\gamma):=\alpha_1\cdot(\rho_1^\alpha)^{-1}\cdot\rho_1^\omega\cdot\omega_2\cdot\alpha_2\cdot\ldots
\cdot(\rho_r^\alpha)^{-1}\cdot\rho_r^\omega\cdot\omega_{r+1}.
$$
\end{de}

\begin{de}\label{Definition_Nearby}
A \emph{nearby cycle} $\tilde{\gamma}$ associated with $\gamma$ is a smooth path in $B_\A$,
homologous to an embedding $r(\gamma)$ of $\gamma$ in $\Tub(\A)$, lying in
$$
\left(\bigcup_{j=1}^r\N(\ell_j)\cup\bigcup_{j=1}^r\BB_{p_j}\right)\setminus\bigcup\A.
$$
\end{de}

\begin{rmk}
Note that if $p\neq p_j$ a nearby cycle $\tilde{\gamma}$ can intersect $\BB_p$ only at some $V_{\ell_k,p}$
for some $k$; there are always nearby cycles which do not intersect $\BB_p$ for  $p\neq p_j$.
\end{rmk}

\section{Invariant}\label{Section_Invariant}

Let $(\C,\xi,\gamma)$ be a inner-cyclic triplet; and suppose that $(\A,\xi, \gamma)$ is a realization. 
Denote by $i$ the inclusion map of the boundary manifold in the exterior, i.e.,
$i : B_\A \hookrightarrow E_\A$.  We consider the following composition map:
\begin{equation*}
	\chi_{(\A,\xi)}:
		 \HH_1(B_\A)  \overset{i_*}{\longrightarrow}  \HH_1(E_\A)  \overset{\xi}{\longrightarrow}  \CC^*
\end{equation*}

Let $\tilde{\gamma}$ be a nearby cycle associated with $\gamma$. 
As we are going to prove in Lemma~\ref{Lemma_WellDefine}, we can define 
$$
\mathcal{I}(\A,\xi, \gamma) := \chi_{(\A,\xi)} (\widetilde{\gamma}).
$$
Our main goal is to prove that this is a topological invariant, as we state here.

\begin{thm}[Main result]\label{Theorem_Invariant}
	If $(\A,\xi,\gamma)$  and $(\A',\xi,\gamma)$ are two inner-cyclic realizations of $(\C,\xi,\gamma)$ with the same (oriented and ordered)
	 topological type, then 
	 \begin{equation*}
		\mathcal{I}(\A,\xi, \gamma) = \mathcal{I}(\A',\xi,\gamma).
	\end{equation*}
\end{thm}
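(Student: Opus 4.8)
The plan is to reduce the invariance statement to a statement about the behavior of the composition $\chi_{(\A,\xi)} = \xi \circ i_*$ under an ordered-oriented homeomorphism of pairs. First I would observe that an ordered-oriented homeomorphism $\Phi\colon (\PP^2,\A)\to(\PP^2,\A')$ of topological types can be arranged to carry a compatible system of neighborhoods of $\A$ to one of $\A'$; this is a standard uniqueness-of-regular-neighborhood argument (using the references \cite{CohSuc_Boundary,Dur} already cited in the paper), so $\Phi$ restricts to homeomorphisms $\Tub(\A)\to\Tub(\A')$, $E_\A\to E_{\A'}$ and $B_\A\to B_{\A'}$, commuting with the inclusion $i$. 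Hence on homology we get a commuting square
\begin{equation*}
\begin{array}{ccc}
\HH_1(B_\A) & \xrightarrow{\ i_*\ } & \HH_1(E_\A)\\
\Phi_*\downarrow & & \downarrow\Phi_*\\
\HH_1(B_{\A'}) & \xrightarrow{\ i_*\ } & \HH_1(E_{\A'}).
\end{array}
\end{equation*}

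Next I would check that the right vertical map $\Phi_*\colon\HH_1(E_\A)\to\HH_1(E_{\A'})$ is compatible with the characters, i.e. $\xi' \circ \Phi_* = \xi$. This is where the \emph{ordered} hypothesis enters: because $\Phi$ preserves the ordering of the lines, $\Phi_*$ sends the meridian $v_\ell$ of $\ell\in\A$ to the meridian $v_{\ell'}$ of the correspondingly-indexed line $\ell'\in\A'$ (up to orientation, which is fixed by the \emph{oriented} hypothesis — meridians inherit a canonical orientation from the complex/ambient orientation). Since by definition of a realization $\xi_\A(v_\ell)=\xi(\ell)=\xi_{\A'}(v_{\ell'})$, and these meridians generate $\HH_1(E_{\A})$, the identity $\xi'\circ\Phi_*=\xi$ follows. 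Therefore $\chi_{(\A',\xi)}\circ\Phi_* = \xi'\circ i_*\circ\Phi_* = \xi'\circ\Phi_*\circ i_* = \xi\circ i_* = \chi_{(\A,\xi)}$.

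It then remains to see that $\Phi_*$ carries the class of a nearby cycle $\tilde\gamma$ of $\gamma$ in $B_\A$ to (a class computing) a nearby cycle of $\gamma$ in $B_{\A'}$. Here I would use that, by Definition~\ref{Definition_Nearby}, $\tilde\gamma$ is only determined up to homology in $B_\A$ by the embedded loop $r(\gamma)\subset\bigcup\A$, which in turn is built combinatorially from the cyclic sequence $\ell_1<\dots<\ell_r$ of $\supp(\gamma)$ and the points $p_j=\ell_j\cap\ell_{j+1}$; the homeomorphism $\Phi$ matches this data line-for-line and point-for-point (again using the preserved ordering and that $\Phi$ respects the incidence graph $\Gamma_\A\equiv\Gamma_{\A'}$). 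Thus $\Phi_*([\tilde\gamma])$ is the class of an embedding of $\gamma$ in $\Tub(\A')$ lying in the prescribed union of holed neighborhoods and balls, hence equals $[\tilde\gamma']$ for some nearby cycle $\tilde\gamma'$ of $\gamma$ in $B_{\A'}$. Combining, $\mathcal{I}(\A,\xi,\gamma)=\chi_{(\A,\xi)}([\tilde\gamma])=\chi_{(\A',\xi)}(\Phi_*[\tilde\gamma])=\chi_{(\A',\xi)}([\tilde\gamma'])=\mathcal{I}(\A',\xi,\gamma)$.

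The main obstacle is the very first step: showing that an abstract ordered-oriented homeomorphism of pairs can be replaced by (or is isotopic to) one that respects a compatible system of neighborhoods, so that it genuinely induces a homeomorphism of the triple $(E_\A, B_\A, \Tub(\A))$. One must be a little careful that this does not already presuppose some regularity; the cleanest route is probably to fix the regular neighborhoods intrinsically enough (as the paper does, via the blow-up $\pi$ and plumbing) that $\Phi$, or a small isotopy of it, automatically preserves them, invoking uniqueness of regular neighborhoods in the PL or smooth category and the fact that $B_\A\hookrightarrow \Tub(\A)\setminus\bigcup\A$ and $B_\A\hookrightarrow E_\A$ are homotopy equivalences, so nothing is lost at the level of $\HH_1$. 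A secondary subtlety, already flagged above, is orientations of meridians: one needs the \emph{oriented} hypothesis precisely to pin down that $\Phi_*v_\ell = v_{\ell'}$ rather than its inverse, and hence that $\xi'\circ\Phi_*=\xi$ on the nose rather than up to complex conjugation on some coordinates.
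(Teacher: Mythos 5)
Your proposal correctly identifies the two lemmas the argument hinges on — well-definedness of $\mathcal{I}$ from a chosen nearby cycle (the paper's Lemma~\ref{Lemma_WellDefine}), and compatibility of the homeomorphism with nearby cycles (the paper's Lemma~\ref{Lemma_Homotopy}) — and it correctly locates where the \emph{ordered} and \emph{oriented} hypotheses enter: $\Phi_*$ sends meridians to the correspondingly-indexed meridians with matching signs, so $\xi\circ\Phi_*=\xi$. But at the crucial geometric step you take a genuinely different, and more demanding, route than the paper does. You want to replace $\Phi$ by an isotopic homeomorphism that carries $\Tub(\A)$ \emph{onto} $\Tub(\A')$ and hence $B_\A$ onto $B_{\A'}$, so that the commuting square on $H_1$ applies verbatim. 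This requires a uniqueness-of-regular-neighborhoods statement together with a topological isotopy extension argument in dimension~$4$, for a subset $\bigcup\A$ that is not a submanifold; that machinery is available in the smooth or PL category, but the theorem only posits a homeomorphism of pairs, and you yourself flag this as ``the main obstacle.''

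The paper avoids this obstacle entirely with a nesting trick rather than an isotopy: it first \emph{fixes} $\Tub(\A')$, then builds a (possibly much smaller) regular neighborhood $\Tub(\A)$ inside $\phi^{-1}(\Tub(\A'))$, piece by piece, so that $\phi(\BB_p)\subset\BB_{p'}$ and $\phi(\N(\ell))\subset\N(\ell')$ hold by construction. Then one chooses a family of nearby cycles $\tilde{\gamma}_t$ converging to $r(\gamma)$ and disjoint from the irrelevant balls; $\phi(\tilde{\gamma}_1)$ automatically lands in the prescribed region $\bigl(\bigcup_j\N(\ell'_j)\cup\bigcup_j\BB_{p'_j}\bigr)\setminus\bigcup\A'$ and is homologous in $\Tub(\A')$ to the embedding $\phi(r(\gamma))$; it may not lie in $B_{\A'}$ on the nose, but it can be deformed there inside that region, yielding a bona fide nearby cycle. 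This is weaker than what you ask for (it produces \emph{one} well-placed nearby cycle, not a homeomorphism $B_\A\to B_{\A'}$), and precisely for that reason the argument must be combined with Lemma~\ref{Lemma_WellDefine} — the nearby cycle is only controlled up to the ambiguity killed by $\ker\xi$, which is why the inner-cyclic conditions in Definition~\ref{Definition_InnerCyclic} are needed. So your argument buys a cleaner conceptual picture (a genuine commuting square for $i_*$), at the cost of nontrivial ambient isotopy theory you would need to supply; the paper's buys robustness in the purely topological category, at the cost of an explicit well-definedness lemma that your write-up leaves somewhat implicit. I would also push back slightly on your sentence that a nearby cycle ``is only determined up to homology in $B_\A$ by $r(\gamma)$'': Definition~\ref{Definition_Nearby} fixes it up to homology in $\Tub(\A)$, not in $B_\A$, which is exactly the gap Lemma~\ref{Lemma_WellDefine} is designed to close.
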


The aim of this section is to prove Theorem \ref{Theorem_Invariant}. 
The first step is to prove the following Lemma which shows that for a given inner-cyclic realization $(\A,\xi,\gamma)$, 
the image of  a nearby cycle $\tilde{\gamma}$ by $\chi_{(\A,\xi)}$ depends only in $\gamma$.

\begin{lem}\label{Lemma_WellDefine}
	Let $(\A,\xi,\gamma)$ be an inner-cyclic realization. If $\tilde{\gamma}$ and $\tilde{\gamma}'$ are nearby cycles associated with $\gamma$, then:
	\begin{equation*}
		\chi_{(\A,\xi)} (\tilde{\gamma})=\chi_{(\A,\xi)} (\tilde{\gamma}').
	\end{equation*}
\end{lem}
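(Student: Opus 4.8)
The plan is to show that any two nearby cycles $\tilde\gamma$ and $\tilde\gamma'$ associated with $\gamma$ are homologous in $B_\A$ after allowing for correction terms whose $\chi_{(\A,\xi)}$-value is $1$, using crucially the inner-cyclic hypothesis. First I would recall that by definition both $\tilde\gamma$ and $\tilde\gamma'$ are homologous \emph{in $\Tub(\A)$} to embeddings $r(\gamma)$, $r(\gamma)'$ of the cycle $\gamma$; since $B_\A\hookrightarrow\Tub(\A)\setminus\bigcup\A$ is a homotopy equivalence and both nearby cycles live in the subset $\big(\bigcup_j\N(\ell_j)\cup\bigcup_j\BB_{p_j}\big)\setminus\bigcup\A$, the difference $\tilde\gamma-\tilde\gamma'$ is a cycle supported in that subregion of $B_\A$. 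So it suffices to understand $H_1$ of the relevant pieces $B_{\ell_j}$ and $B_{p_j}$ and the gluing tori $\partial V_{\ell_k,p}$, and to identify which homology classes there are killed by $\chi_{(\A,\xi)}$.

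Next I would carry out a Mayer--Vietoris / direct generator-and-relation analysis of that subregion. Each $B_{\ell_j}$ deformation retracts onto something like $\check\ell_j\times S^1$, whose $H_1$ is generated by the classes of $\{q_j\}\times S^1$ (a meridian $\mu_{\ell_j}$ of $\ell_j$) together with loops $\lambda$ in $\check\ell_j$; the boundary tori $\partial V_{\ell_k,p}$ contribute a meridian $\mu_{\ell_k}$ and a longitude. The point is that the ambiguity in choosing the paths $\alpha_j,\omega_j$ and the points $q_j$ changes $\tilde\gamma$ precisely by: (i) loops that bound in $\check\ell_j$ (hence are trivial already in $B_\A$), (ii) meridians $\mu_{\ell_j}$ of the lines $\ell_j$ with $v_{\ell_j}\in\gamma$, and (iii) meridians $v_p$ of the blown-up points $E_p$ for $p\in\ell_j$ with $v_{\ell_j}\in\gamma$. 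Here is where conditions \eqref{Definition_InnerCyclic:1} and \eqref{Definition_InnerCyclic:3} of Definition~\ref{Definition_InnerCyclic} enter: $\xi(\ell_j)=1$ kills the meridians of type (ii), and $\xi(p)=\prod_{\ell\in p}\xi(\ell)=1$ — which by \eqref{eq:mer_pt} is exactly the value of $\xi$ on $v_p$ — kills those of type (iii). Hence every correction term is annihilated by $\chi_{(\A,\xi)}=\xi\circ i_*$.

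Then I would assemble these observations: writing $[\tilde\gamma]-[\tilde\gamma']$ in $H_1(B_\A;\ZZ)$ as an integer combination of the generators just listed, all of which map under $i_*$ to classes on which $\xi$ evaluates to $1$ (the $\check\ell_j$-loops map to $0$, the meridians to things with $\xi$-value $1$), we get $\chi_{(\A,\xi)}(\tilde\gamma)/\chi_{(\A,\xi)}(\tilde\gamma')=1$. The one subtlety I expect to be the main obstacle is bookkeeping at the transition balls $\BB_{p_j}$: inside $B_{p_j}=\overline{\partial\BB_{p_j}\setminus\bigcup_{\ell\in p_j}V_{\ell,p_j}}$ the two radial arcs $\rho_j^\alpha,\rho_j^\omega$ can be joined in genuinely different isotopy classes, and the difference is measured by meridians of the various lines through $p_j$ — so one must check that \emph{all} of these are meridians of lines $\ell$ with $\xi(\ell)=1$, which is guaranteed by condition \eqref{Definition_InnerCyclic:2} ($\xi(\ell)=1$ for every $\ell\in p$ with $v_p\in\gamma$) — or are the meridian $v_{p_j}$ itself with $\xi(v_{p_j})=1$. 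Getting these point-vertex contributions exactly right, and confirming that the remaining ambiguity really is exhausted by the listed generators (no extra class from the global gluing of $B_\A$ can appear because both cycles already lie in the small subregion), is the technical heart of the argument; the rest is routine once the local models $\check\ell\times D^2$ and $\partial\BB_p$ are written down.
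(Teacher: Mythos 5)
Your argument is correct and follows essentially the same route as the paper's own proof: you identify the freedom in choosing a nearby cycle as (i) the choice of embedding $r(\gamma)$ (accounting for meridians $v_p$, $p\in\ell_j$, killed by condition \eqref{Definition_InnerCyclic:3}), (ii) the push-off into $\partial\N(\ell_j)$ (meridians $v_{\ell_j}$, killed by \eqref{Definition_InnerCyclic:1}), and (iii) the connecting arcs inside $B_{p_j}$ (meridians $v_\ell$ for $\ell\in p_j$, killed by \eqref{Definition_InnerCyclic:2}), and then conclude that $\chi_{(\A,\xi)}$ annihilates the difference. The paper phrases this slightly differently (first the freedom in $r(\gamma)$, then the freedom in lifting $r(\gamma)$ to $\tilde\gamma$) but the generator-by-generator bookkeeping and the use of the three inner-cyclic conditions is the same.
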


\begin{proof}
	The choice of a nearby cycle  in $B_\A$ associated with $\gamma$ depends first on 
the embedding $r(\gamma)$ in $\A$, see Definition~\ref{def:embedding} and use the notation inside.
There is some freedom in the choice of $r(\gamma)$; in $r(\gamma)\cap\ell_j$ we  can
add some meridians around the multiple points of $\A$ in $\ell_j$;
recall that for any such multiple point~$p$ we have $\xi(p)=1$
by Definition~\ref{Definition_InnerCyclic}\eqref{Definition_InnerCyclic:3}.

Once $r(\gamma)$ has been chosen, $\tilde{\gamma}$ lies in $B_\A$, and is homologous to $r(\gamma)$ in $\Tub(\A)$.
Two different representants will differ by the meridians $v_{\ell_j}$ and by cycles
in $\BB_{p_j}\setminus\A$; these ones are linear combinations of meridians
$v_\ell$ for $\ell\in p_j$.

Summarizing the difference of two nearby cycles is a linear combination of
meridians $v_\ell$, where either $\ell\in\supp(\gamma)$ or $\ell\in p_j$,
and meridians $v_p$, $p\in\ell_j$. By Definition~\ref{Definition_InnerCyclic}
all these meridians are in $\ker\xi$ and the result follows.
\end{proof}

We have proved with this Lemma that $\mathcal{I}(\A,\xi, \gamma)$ is well-defined
for a particular $\A$. The following result studies the behavior of this 
invariant under orientation and order preserving homeomorphisms. We prove that
there are nearby cycles of the first realization which are sent to nearby cycles
of the second realization; note that we do not need the stronger result that
would say that the image of \emph{any} nearby cycle is a nearby cycle.

\begin{lem}\label{Lemma_Homotopy}
Let $(\C,\xi,\gamma)$ be an ordered inner-cyclic triplet. 
Suppose that $(\A,\xi, \gamma)$ and $(\A',\xi,\gamma)$ are 
two ordered realizations of  $(\C,\xi,\gamma)$ such that there exists a homeomorphism
$\phi:(\PP^2,\A)\rightarrow (\PP^2,\A')$  preserving orders and orientations of the lines.
Then there is a nearby cycle $\tilde{\gamma}$ associated to $\gamma$ in $B_\A$, 
such that $\phi(\tilde{\gamma})$ is isotopic to a nearby cycle 
associated with $\gamma$ in $B_{\A'}$.
\end{lem}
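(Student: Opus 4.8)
The plan is to reduce the statement to a local, piece-by-piece analysis of the graph manifold $B_\A$ and to exploit the fact that a nearby cycle is only well-defined up to homology in $E_\A$ (Lemma~\ref{Lemma_WellDefine}), so we have a lot of freedom. First I would fix a compatible system of neighborhoods $\{\Tub(\ell),\BB_p\}$ for $\A$, and transport it by $\phi$ to a compatible system $\{\phi(\Tub(\ell)),\phi(\BB_p)\}$ for $\A'$; since $\phi$ is a homeomorphism of pairs this is again a regular neighborhood of $\A'$, hence isotopic (by uniqueness of regular neighborhoods, cf.~\cite{CohSuc_Boundary,Dur}) to any a priori chosen regular neighborhood $\Tub(\A')$. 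Let $h$ be an ambient isotopy carrying $\phi(\Tub(\A))$ onto $\Tub(\A')$; replacing $\phi$ by $h\circ\phi$ we may assume $\phi(\Tub(\A))=\Tub(\A')$, $\phi(E_\A)=E_{\A'}$, and $\phi(B_\A)=B_{\A'}$. The point of this first step is that afterwards $\phi$ restricts to a homeomorphism $B_\A\to B_{\A'}$ compatible with the inclusions into the exteriors.

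Next I would build an explicit embedding $r(\gamma)\subset\bigcup\A$ as in Definition~\ref{def:embedding}, using the data $q_j\in\check\ell_j$, radii $\rho_j^\alpha,\rho_j^\omega$ and paths $\alpha_j,\omega_j$. Applying $\phi$ yields a simple closed loop $\phi(r(\gamma))\subset\bigcup\A'$. Because $\phi$ preserves the ordering of the lines and sends $\ell_j$ to $\ell'_j$ and the double point $p_j=\ell_j\cap\ell_{j+1}$ to $p'_j=\ell'_j\cap\ell'_{j+1}$, the loop $\phi(r(\gamma))$ meets exactly the lines $\ell'_1,\dots,\ell'_r$ and the balls $\BB_{p'_j}$ in the same cyclic pattern; hence it is an embedding of $\gamma$ in $\A'$ — possibly not literally of the normal form in Definition~\ref{def:embedding} (the image of a radius need not be a radius, the image of $q_j$ need not lie in the chosen $\check\ell'_j$), but it is isotopic within $\bigcup\A'$ to one that is, and more importantly it is homologous in $\Tub(\A')$ to the chosen normal form; here is exactly where Definition~\ref{Definition_InnerCyclic}\eqref{Definition_InnerCyclic:3} (all $\xi(p)=1$ for $p\in\ell_j$) lets us absorb the discrepancy, as in the proof of Lemma~\ref{Lemma_WellDefine}.

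Then I would push $r(\gamma)$ off $\bigcup\A$ into $B_\A$ to get a nearby cycle $\tilde\gamma$, concretely by running it just inside $\partial\N(\ell_j)$ and through the pieces $B_{p_j}$, staying inside $\bigl(\bigcup_j\N(\ell_j)\cup\bigcup_j\BB_{p_j}\bigr)\setminus\bigcup\A$ as required by Definition~\ref{Definition_Nearby}. Since $\phi(\Tub(\A))=\Tub(\A')$ and $\phi$ maps $\N(\ell_j)$ onto a holed neighborhood of $\ell'_j$ and $\BB_{p_j}$ onto a ball at $p'_j$, the image $\phi(\tilde\gamma)$ is a smooth simple closed curve in $B_{\A'}$ that lies in $\bigl(\bigcup_j\N(\ell'_j)\cup\bigcup_j\BB_{p'_j}\bigr)\setminus\bigcup\A'$ and is homologous in $\Tub(\A')$ to $\phi(r(\gamma))$, hence to the embedding of $\gamma$ in $\A'$. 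Thus $\phi(\tilde\gamma)$ satisfies the defining conditions of a nearby cycle associated with $\gamma$ in $B_{\A'}$, and by general position any such curve is isotopic in $B_{\A'}$ to one in the normal form of Definition~\ref{Definition_Nearby}; this gives the conclusion.

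The main obstacle I anticipate is the first step: making precise that $\phi$ may be assumed to carry a chosen regular neighborhood of $\A$ exactly onto a chosen regular neighborhood of $\A'$, and in particular $B_\A$ onto $B_{\A'}$. This is a uniqueness-of-regular-neighborhoods argument in the topological (not PL or smooth) category for the possibly wild pair $(\PP^2,\A)$; one has to either invoke that line arrangements are tame so the PL/smooth theory applies after smoothing, or argue directly that $\phi$ maps any compatible system of neighborhoods to a compatible system and that the union of a compatible system is unique up to ambient isotopy. A secondary, purely bookkeeping obstacle is keeping track of orientations: one must check that $\phi$ being orientation-preserving on $\PP^2$ and on each line forces it to preserve the co-orientations of the lines, so that meridians go to meridians with the correct sign and $\phi(\tilde\gamma)$ represents $+\tilde\gamma$ rather than its inverse — but this follows from the orientation hypotheses and does not affect the argument beyond signs, which is why the statement of Lemma~\ref{Lemma_Homotopy} only asks for \emph{a} nearby cycle, not all of them.
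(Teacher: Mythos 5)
Your proof is correct and has the same overall shape as the paper's---transport an embedding $r(\gamma)$ and a push-off $\tilde\gamma$ through $\phi$ and verify the defining conditions of a nearby cycle---but differs on the neighborhood-matching step. You fix $\Tub(\A)$, push it forward by $\phi$, and use uniqueness of regular neighborhoods plus an ambient isotopy to normalize so that $\phi(\Tub(\A)) = \Tub(\A')$ and hence $\phi(B_\A)=B_{\A'}$. The paper works in the opposite direction: it fixes $\Tub(\A')$ first and then shrinks the compatible system on the source so that $\phi(\BB_p)\subset\BB_{p'}$ and $\phi(\N(\ell))\subset\N(\ell')$, giving only inclusions rather than equalities; the price is that $\phi$ of a nearby cycle lies strictly inside $\Tub(\A')$ and must still be isotoped outward to $B_{\A'}$ at the end, but the reward is that the regular-neighborhood-uniqueness theorem (and the category issue you rightly flag as the weak spot) is avoided entirely. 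Both routes work for the tame pairs at hand. The paper is also careful to build a continuous family $\tilde\gamma_t$ of nearby cycles, disjoint from the irrelevant balls $\BB_p$, $p\neq p_1,\dots,p_r$, converging to $r(\gamma)$, so that $\phi$ carries the whole family into $\Tub(\A')$; this corresponds to your ``push just inside $\partial\N(\ell_j)$'' step. One small inaccuracy in your write-up: the appeal to Definition~\ref{Definition_InnerCyclic}\eqref{Definition_InnerCyclic:3} to absorb the mismatch between $\phi(r(\gamma))$ and a normal-form embedding is out of place here---Lemma~\ref{Lemma_Homotopy} is a purely topological isotopy statement and makes no reference to $\xi$; the mismatch is handled by an isotopy in $\bigcup\A'$, while $\xi$-independence is the content of Lemma~\ref{Lemma_WellDefine}, which is invoked separately in the proof of Theorem~\ref{Theorem_Invariant}.
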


\begin{proof}
If $\ell$ is a line in $\A$, $\ell'$ is the corresponding line in $\A'$;
we use the same convention for multiple points. 

We start fixing a  regular neighborhood $\Tub(\A')$ obtained as a union of $\BB_{p'}$
and $\N(\ell')$. Since $\phi$ is a homeomorphism we can construct 
a regular neighborhood $\Tub(\A)$ obtained as a union of $\BB_{p}$
and $\N(\ell)$, such that $\phi(\BB_{p})\subset\BB_{p'}$ and $\phi(\N(\ell))\subset\N(\ell')$.

Let us choose an embedding $r(\gamma)$ in $\A$ such that
$r(\gamma)$ is disjoint from $\BB_p$ for $p\neq p_1,\dots,p_r$
which is always possible. Moreover we can construct a continuous
family of nearby cycles $\tilde{\gamma}_t$, $t\in(0,1]$ (converging to $r(\gamma)$ when $t\to 0$)
and such that this family is still disjoint with $\BB_p$ for $p\neq p_1,\dots,p_r$.
Then, $\phi(\tilde{\gamma}_1)$ is a nearby cycle associated with $\gamma$ 
which can be deformed to a nearby cycle in $B_{\A'}$.
\end{proof}

\begin{rmk}
The nearby cycles used in Section~\ref{Section_Computability} are different of the one used in this proof. 
Those cycles may intersect the boundary of $\partial\mathcal{N}$, and this is important
for the computation of the invariant, but it does not affect to our goal. 
\end{rmk}

\begin{proof}[Proof of Theorem{\rm~\ref{Theorem_Invariant}}]
 Suppose that there exists $\phi:(\PP^2,\A)\rightarrow (\PP^2,\A')$  preserving orders and orientations of the lines.
 Hence $\phi$ induces an isomorphism $\phi_* : H_1(\PP^2\setminus\A) \rightarrow H_1(\PP^2\setminus\A') $
  preserving $\xi$. 
Let $\tilde{\gamma}$ be an arbitrary nearby cycle associated with $\gamma$. By Lemma~\ref{Lemma_WellDefine},
  $\mathcal{I}(\A,\xi, \gamma) = \chi_{(\A,\xi)} (\widetilde{\gamma})$ depends only on $\gamma$.
	We can then assume that $\tilde{\gamma}$ is one of the particular nearby cycle considered in Lemma~\ref{Lemma_Homotopy}. But this lemma implies that
	$\phi(\tilde{\gamma})$ is isotopic to a nearby cycle $\tilde{\gamma}'$
associated to $\gamma$ in $B_{\A'}$. It follows that $\chi_{(\A,\xi)} (\widetilde{\gamma}')= \chi_{(\A,\xi)} (\widetilde{\gamma})$, and we conclude using Lemma~\ref{Lemma_WellDefine}.
\end{proof}

\begin{propo}\label{Conjugate_Images}
Let $(\A,\xi,\gamma)$ and $(\overline{\A},\xi,\gamma)$ be two inner cyclic complex conjugated 
realization arrangements. We have then:
	\begin{equation*}
		\mathcal{I}(\A,\xi,\gamma)^{-1}=\mathcal{I}(\overline{\A},\xi,\gamma).
	\end{equation*}
\end{propo}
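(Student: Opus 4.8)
The statement relates the invariant of an arrangement $\A$ to that of its complex conjugate $\overline{\A}$, obtained by applying complex conjugation $c:\PP^2\to\PP^2$. The key point is that $c$ is a homeomorphism of $\PP^2$ carrying $\bigcup\A$ onto $\bigcup\overline{\A}$ and respecting the ordering of the lines (each $\ell\mapsto\overline{\ell}$), but it is \emph{orientation-reversing}: it reverses the complex orientation of $\PP^2$, and it reverses the orientation of each line (which is a copy of $\PP^1$). So we cannot apply Theorem~\ref{Theorem_Invariant} directly; instead, I plan to trace through exactly how orientation-reversal affects the definition of $\mathcal{I}$.

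First I would set up a regular neighborhood $\Tub(\overline{\A})$ as the image under $c$ of a regular neighborhood $\Tub(\A)$; this is again a compatible system of neighborhoods, since $c$ is a homeomorphism, and it identifies $B_{\overline{\A}}=c(B_\A)$, $E_{\overline{\A}}=c(E_\A)$, with $c$ intertwining the two inclusion maps $i:B_\A\hookrightarrow E_\A$ and $\overline{i}:B_{\overline{\A}}\hookrightarrow E_{\overline{\A}}$. Next I would examine the effect of $c_*$ on $H_1$. On $H_1(M_\A;\ZZ)=H_1(E_\A;\ZZ)$, conjugation sends the meridian $v_\ell$ of $\ell$ to the meridian $v_{\overline{\ell}}$ of $\overline{\ell}$ \emph{with reversed sign}: a small positively-oriented loop around $\ell$ becomes, after conjugation, a \emph{negatively}-oriented loop around $\overline{\ell}$. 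Hence $c_*(v_\ell)=-v_{\overline{\ell}}$ under the identification $H_1(E_\A)\cong H_1(E_{\overline{\A}})$ via the natural labelling. Consequently, for the character $\xi$ on $\overline{\A}$ with $\xi(\overline{\ell})=\xi(\ell)$, we get $\xi\circ c_* = \xi^{-1}$ as characters on $H_1(E_\A;\ZZ)$, i.e. $\xi(c_*(z))=\xi(z)^{-1}$ for all $z$.

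Now take a nearby cycle $\tilde{\gamma}$ for $(\A,\xi,\gamma)$ in $B_\A$; its image $c(\tilde{\gamma})$ is a loop in $B_{\overline{\A}}$. I claim $c(\tilde{\gamma})$ is (isotopic to) a nearby cycle for $(\overline{\A},\xi,\gamma)$: indeed $\tilde\gamma$ is homologous in $\Tub(\A)$ to an embedding $r(\gamma)$ of $\gamma$ in $\bigcup\A$, and $c$ carries embeddings of $\gamma$ to embeddings of $\gamma$ in $\bigcup\overline{\A}$ — the cyclic structure of $\gamma$ in $\Gamma_\A\equiv\Gamma_{\overline{\A}}$ is preserved, while the local orientations of the pieces are reversed, which does not affect membership in the class of embeddings (Definition~\ref{def:embedding} allows arbitrary choices of the auxiliary paths, and the orientation of $r(\gamma)$ as a \emph{cycle} is a matter of which direction one traverses $\gamma$, which is already a choice). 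Reversing orientations of lines simply reverses the radii $\rho_j^\alpha,\rho_j^\omega$ and the paths $\alpha_j,\omega_j$, yielding again an admissible $r(\gamma)$, possibly traversed in the opposite sense; by Lemma~\ref{Lemma_WellDefine} this opposite choice only changes $\tilde{\gamma}$ in its homology class by elements of $\ker\xi$, so $c(\tilde{\gamma})$ computes $\mathcal{I}(\overline{\A},\xi,\gamma)$ possibly up to replacing $\tilde\gamma$ by $-\tilde\gamma$. Putting this together:
\begin{equation*}
\mathcal{I}(\overline{\A},\xi,\gamma)=\xi\bigl(\overline{i}_*(c(\tilde{\gamma}))\bigr)^{\pm1}
=\xi\bigl(c_*(i_*(\tilde{\gamma}))\bigr)^{\pm1}
=\xi\bigl(i_*(\tilde{\gamma})\bigr)^{\mp1}
=\mathcal{I}(\A,\xi,\gamma)^{-1},
\end{equation*}
where the sign bookkeeping in the last step: the $-1$ from $c_*$ (conjugation inverting $\xi$) combined with the $\pm1$ from the traversal direction of the nearby cycle must be reconciled. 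The main obstacle, and the step requiring genuine care, is precisely this sign bookkeeping — one must check that the orientation-reversal of $c$ on $\PP^2$ (which flips whether a nearby cycle in $B_\A$ is ``the'' nearby cycle or its reverse) and the sign on meridians conspire consistently so that the net effect is exactly inversion of the root of unity, not the identity. I would resolve it by fixing one embedding $r(\gamma)$ very explicitly near one vertex-point $p_j$, computing $c(r(\gamma))$ directly in that local model $\BB_{p_j}\cong B^4$ with coordinates conjugated, and checking the orientation of the resulting loop against the convention fixing nearby cycles; everything else is then forced. An alternative, cleaner route would be to factor $c$ through an orientation-\emph{preserving} diffeomorphism followed by one standard orientation-reversing model map and invoke Theorem~\ref{Theorem_Invariant} for the first factor, reducing to a single explicit local computation for the model.
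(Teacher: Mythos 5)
Your proposal rests on a factual error about orientations that in turn introduces a spurious $\pm1$ ambiguity, which you then leave unresolved. Complex conjugation on $\PP^2$ is \emph{orientation-preserving}, not orientation-reversing: $\PP^2$ has real dimension~$4$, and in each affine chart $\CC^2\cong\RR^4$ conjugation reverses orientation twice. What conjugation does reverse is the orientation of each line $\ell\cong\PP^1$ (real dimension~$2$). This is exactly the observation the paper's proof relies on. Because the ambient orientation is preserved while the line orientations are reversed, the normal (meridional) direction around each $\ell$ is reversed, so $c_*(v_\ell)=v_{\bar{\ell}}^{-1}$ — that much you got right — but the direction of traversal of a nearby cycle, which is a loop in the ambient $4$-manifold, is \emph{not} flipped by $c$: the cyclic order in which $r(\gamma)$ visits $\ell_1,p_1,\ell_2,p_2,\dots$ is preserved, and the radii $\rho_j^\alpha,\rho_j^\omega$ still run outward from the center of the ball. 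Hence $c(\tilde{\gamma})$ is a nearby cycle for $(\overline{\A},\gamma)$ on the nose, not possibly one for $-\gamma$; your $\pm1$ is always $+1$.

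Once this is corrected, the ``sign bookkeeping'' you deferred to a local computation near a vertex-point disappears, and the argument closes immediately:
\begin{equation*}
\mathcal{I}(\overline{\A},\xi,\gamma)=\xi\bigl(\overline{i}_*(c(\tilde{\gamma}))\bigr)=\xi\bigl(c_*(i_*(\tilde{\gamma}))\bigr)=\xi\bigl(i_*(\tilde{\gamma})\bigr)^{-1}=\mathcal{I}(\A,\xi,\gamma)^{-1},
\end{equation*}
which is precisely the paper's proof. As it stands, however, your proposal asserts the wrong orientation behavior for $\PP^2$, manufactures an ambiguity out of that misconception, and ends with a plan to resolve it rather than a completed argument — so it has a genuine gap.
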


\begin{proof}
The complex conjugation $(\PP^2,\A)\to(\PP^2,\overline{\A})$ 
is an ordered homeomorphism which preserves the orientation of $\PP^2$ but exchanges
the orientation of the line. For this reason the action of the homeomorphism sends
$\xi$ to $\xi^{-1}$ (since the conjugation sends a meridian $v_\ell$ to $(v_{\bar{\ell}}^{-1}$, with
complex notation). The result follows from the fact that a nearby cycle
for $(\overline{\A},\xi,\gamma)$ it is also a nearby cycle for $(\overline{\A},\xi^{-1},\gamma)$.
\end{proof}

\begin{cor}
	If $\A$ is a complexified real arrangement, 
that is the complexification of a line arrangement defined on $\RR\PP^2$, then:
	\begin{equation*}
		\mathcal{I}(\A,\xi,\gamma)\in\set{-1,1}.
	\end{equation*}
The same result occurs if there is a continuous path of combinatorially equivalent line arrangements connecting
$\A$ and $\overline{\A}$.
\end{cor}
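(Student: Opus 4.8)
The plan is to derive the corollary directly from Proposition~\ref{Conjugate_Images} together with the invariance statement of Theorem~\ref{Theorem_Invariant}. The key observation is that a complexified real arrangement coincides with its own complex conjugate, not merely up to homeomorphism but literally as a subset of $\PP^2$: if $\A$ is defined by equations with real coefficients, then $\overline{\A}=\A$ as arrangements, and this identification respects both the ordering of the lines and the labelling of the multiple points. Hence applying Proposition~\ref{Conjugate_Images} to the pair $(\A,\xi,\gamma)$ and $(\overline{\A},\xi,\gamma)=(\A,\xi,\gamma)$ gives
\begin{equation*}
\mathcal{I}(\A,\xi,\gamma)^{-1}=\mathcal{I}(\overline{\A},\xi,\gamma)=\mathcal{I}(\A,\xi,\gamma),
\end{equation*}
so $\mathcal{I}(\A,\xi,\gamma)^2=1$, which forces $\mathcal{I}(\A,\xi,\gamma)\in\{-1,1\}$ since the invariant is a root of unity (in particular a nonzero complex number).

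For the second assertion, suppose there is a continuous path $\A_t$, $t\in[0,1]$, of combinatorially equivalent line arrangements with $\A_0=\A$ and $\A_1=\overline{\A}$. First I would note that a path of combinatorially constant arrangements is, by the standard Randell/isotopy-type argument, realized by an ambient ordered-and-orientation-preserving homeomorphism $(\PP^2,\A_0)\cong(\PP^2,\A_1)$; moreover along such a path the character $\xi$ and the cycle $\gamma$ are carried along coherently because they are combinatorial data. Hence Theorem~\ref{Theorem_Invariant} applies and yields $\mathcal{I}(\A,\xi,\gamma)=\mathcal{I}(\overline{\A},\xi,\gamma)$. Combining this with Proposition~\ref{Conjugate_Images} again gives $\mathcal{I}(\A,\xi,\gamma)^{-1}=\mathcal{I}(\A,\xi,\gamma)$, hence the value lies in $\{-1,1\}$ as before.

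The only genuinely delicate point is the claim that combinatorial equivalence along a continuous path produces an \emph{ordered} and \emph{orientation-preserving} homeomorphism of pairs, which is exactly the hypothesis needed in Theorem~\ref{Theorem_Invariant}; this is where I would be most careful, invoking the fact that the space of realizations with fixed combinatorics is (locally) a fibration over which the diffeotopy can be integrated, so that the homeomorphism one obtains automatically preserves the labelling of the lines and the orientation of $\PP^2$. In the first part, no such subtlety arises, since $\overline{\A}=\A$ on the nose; there the entire content is the elementary algebra $z^{-1}=z\Rightarrow z^2=1$ applied to a root of unity. I expect the write-up to be short, with the bulk of the justification being the remark that a real arrangement is its own conjugate and the standard isotopy argument for the path case.
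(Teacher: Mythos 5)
Your proof is correct and takes the route the paper intends: the corollary appears immediately after Proposition~\ref{Conjugate_Images} with no written proof, and the intended argument is exactly what you give — for a real arrangement $\overline{\A}=\A$ on the nose, so $\mathcal{I}(\A,\xi,\gamma)^{-1}=\mathcal{I}(\A,\xi,\gamma)$ forces the value to be $\pm 1$, and for the path case one combines Proposition~\ref{Conjugate_Images} with Theorem~\ref{Theorem_Invariant} via Randell's lattice-isotopy theorem. Your flagging of the subtle point (that a combinatorially constant path yields an \emph{ordered}, orientation-preserving homeomorphism) is appropriate and is exactly the hypothesis Theorem~\ref{Theorem_Invariant} requires.
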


\begin{figure}[hb!]
	\centering
\begin{tikzpicture}
	\begin{scope}[scale=1]
		\draw (-2,1.25)  -- (2,1.25);
		\draw (-2,-1.25) -- (2,-1.25);		
		\draw (-1.25,-2) -- (-1.25,2);
		\draw (1.25,-2) -- (1.25,2);
		\draw (-2,-2) -- (2,2);
		\draw (-2,2) -- (2,-2);
		\node at (-3,0) {$L_0=\infty$};
		\node at (1.25,2.5) {$L_1$};
		\node at (-1.25,2.5) {$L_2$};
		\node at (-2.5,2.5) {$L_3$};
		\node at (-2.5,1.25) {$L_4$};
		\node at (-2.5,-1.25) {$L_5$};
		\node at (-2.5,-2.5) {$L_6$};
	\end{scope}
\end{tikzpicture}
		\caption{The Ceva-7 arrangement  $\mathscr{C}_7$ \label{Ceva7} }
\end{figure}
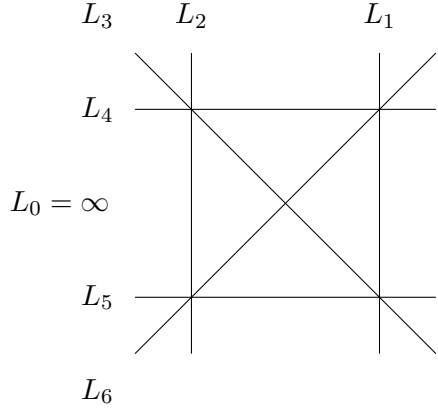

\begin{ex}\label{Example_Invariant}
	Let $\C=(\L,\P)$ be the combinatorics defined by $\L=\set{L_0,\cdots,L_6}$ and 
	\begin{multline*}
		\P=\left\{\set{L_0,L_1,L_2},\set{L_0,L_3},\set{L_0,L_4,L_5},\set{L_0,L_6},\set{L_1,L_3,L_5},\right. \\
						\left.\set{L_1,L_4,L_6},\set{L_2,L_3,L_4},\set{L_2,L_5,L_6},\set{L_3,L_6}\right\}.
	\end{multline*}
The following triplet $(\C,\xi,\gamma)$ is inner-cyclic:
\begin{itemize}	
	\item $\xi$ is the character on $\C=(\L,\P)$ defined by:
	\begin{equation*}
		(L_0,L_1,\cdots,L_6) \longmapsto (1,-1,-1,1,-1,-1,1).
	\end{equation*}
	\item $\gamma$ is the cycle of $\Gamma_\C$ defined by $v_{L_0}\rightarrow v_{P_{0,3}}\rightarrow v_{L_3}\rightarrow v_{P_{3,6}}\rightarrow v_{L_6}\rightarrow v_{P_{0,6}}\rightarrow v_{L_0}$.
 \end{itemize}

	The arrangement $\mathscr{C}_7$ pictured in Figure~\ref{Ceva7} is --up to projective transformation-- 
the only one realization of the combinatorics $\C$. 
There is a nearby cycle $\tilde{\gamma}$ associated with the cycle $\gamma$ 
such that its image by the map $i_*:\HH_1(B_{\mathscr{C}_7})\rightarrow\HH_1(E_{\mathscr{C}_7})$ is $-v_5$
 (an algorithm to compute is given in Section~\ref{Section_Computability}). Then we have:
	\begin{equation*}
		\mathcal{I}(\mathscr{C}_7,\xi,\gamma),\xi)=\xi(-v_5)=-1.
	\end{equation*}
\end{ex}

\section{Characteristic varieties}\label{Section_CharacteristicVarieties}

The characteristic varieties of an arrangement can be defined as the jumping loci 
of twisted cohomology of rank one local system. They only depend on its fundamental group.
For $\xi\in H^1(M_\A;\CC^*)$, let $\L_\xi$ be local system of coefficients defined
by $\xi$.

\begin{de}
	The \emph{characteristic varieties} of an arrangement $\A$ are:
	\begin{equation*}
		\V_k(\A)=\set{\ \xi\in\TT(\A)\ \mid\ \dim_\CC \left( \HH^1(M_\A;\L_\xi) \right) \geq k\ }.
	\end{equation*}
\end{de}

\begin{de}
	The \emph{depth} of a character $\xi\in\TT(\A)$ is:
	\begin{equation*}
		\DEPTH(\xi)=\max\set{k\in\NN\ \mid\ \xi\in\V_k}=\dim_\CC \HH^1(M_\A;\L_\xi).
	\end{equation*}
\end{de}

Hence for the study of characteristic varieties, we need to be able to compute the twisted cohomology
spaces of $M_\A$.  

\subsection{Geometric interpretation of the notion of inner cyclic}\mbox{}

Let us recall that $\pi:\hat{\PP}^2\to\PP^2$ is the blow-up of $\PP^2$ over the points of $\P$; the main goal
of the construction of $\hat{\PP}^2$ is to obtain $\tilde{\A}:=\pi^{-1}(\A)$ as a normal
crossing divisor (in fact, we need only to blow up the points of multiplicity at least three, but it
is harmless to do extra blow-ups).
By construction, we have that $\hat{\PP}^2\setminus\bigcup\hat{\A}\equiv E_\A$, 
then a character $\xi$ on $\p(E_\A)$ can be view as a character on $\p(\hat{\PP}^2\setminus\bigcup\hat{\A})$ 
(also noted $\xi$). Let $\hat{\Gamma}_\A$ be the dual graph of $\hat{\A}$. 

\begin{de}
A component $H\in\hat{\A}$ is \emph{unramified} for 
the character $\xi$ if $\xi(v_H)=1$. 
It is \emph{inner unramified} for $\xi$ if it is unramified and 
all its neighbours (in $\hat{\Gamma}_\A$) too. 
The set of all the inner unramified components of $\hat{\A}$ is denoted by $\U_\xi\subset \hat{\A}$.
The dual graph of $\hat{\A}$ of inner unramified components is denoted by $\hat{\Gamma}_{\U_\xi}$.
\end{de}

\begin{de}
	Let $\A$ be an arrangement. A character $\xi$ is \emph{inner-cyclic} if it is torsion and $b_1(\hat{\Gamma}_{\U_\xi})>0$. 
\end{de}

\begin{propo}
	An \emph{inner-cyclic arrangement} is the data of a triple $(\A,\xi,\gamma)$, where $\xi$ is an inner-cyclic character on $E_\A$ and $\gamma$ a cycle of $\hat{\Gamma}_{\U_\xi}$.
\end{propo}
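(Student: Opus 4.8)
The statement is essentially an \emph{unwinding of definitions}: once a realization $\A$ is fixed, it asserts that the combinatorial notion of an inner cyclic triplet (Definition~\ref{Definition_InnerCyclic}) matches the geometric notion of an inner-cyclic character on $E_\A$ equipped with a cycle of $\hat{\Gamma}_{\U_\xi}$. So the plan is to translate each of the three conditions of Definition~\ref{Definition_InnerCyclic} into a statement about inner unramified components of $\hat\A$, and then read the outcome back off $\hat{\Gamma}_{\U_\xi}$.

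First I would recall the dictionary of Remark~\ref{rmk:realization}: after blowing up the points of $\P$ (performing harmless extra blow-ups at double points if needed) the dual graph $\hat{\Gamma}_\A$ of $\hat\A$ is canonically identified with the incidence graph $\Gamma_\A\equiv\Gamma_\C$, the line-vertex $v_\ell$ corresponding to the strict transform $\tilde\ell$ and the point-vertex $v_p$ to the exceptional divisor $E_p$. Under this identification the neighbours of $v_\ell$ in $\hat{\Gamma}_\A$ are exactly the $v_p$ with $p\in\ell$, and the neighbours of $v_p$ are exactly the $v_\ell$ with $\ell\in p$; moreover $\xi(v_{\tilde\ell})=\xi(\ell)$ and, by~\eqref{eq:mer_pt} and Definition~\ref{def:car_point}, $\xi(v_{E_p})=\xi(p)=\prod_{\ell\in p}\xi(\ell)$. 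Hence $\tilde\ell$ is inner unramified iff $\xi(\ell)=1$ and $\xi(p)=1$ for every $p\in\ell$; and $E_p$ is inner unramified iff $\xi(p)=1$ and $\xi(\ell)=1$ for every $\ell\in p$, the first equality being automatic from the product formula once the second holds. Comparing with Definition~\ref{Definition_InnerCyclic}, conditions~\eqref{Definition_InnerCyclic:1} and~\eqref{Definition_InnerCyclic:3} say precisely that every line-vertex of $\gamma$ is an inner unramified component, and condition~\eqref{Definition_InnerCyclic:2} says precisely that every point-vertex of $\gamma$ is an inner unramified component. Since $\hat{\Gamma}_{\U_\xi}$ is the subgraph of $\hat{\Gamma}_\A$ induced on $\U_\xi$, an edge of $\gamma$ lies in $\hat{\Gamma}_{\U_\xi}$ as soon as both its endpoints do; so $(\C_\A,\xi,\gamma)$ satisfies Definition~\ref{Definition_InnerCyclic} if and only if $\gamma$ is a cycle of $\hat{\Gamma}_{\U_\xi}$.

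It then remains to close the loop on the two global conditions: a circular cycle $\gamma$ of $\hat{\Gamma}_{\U_\xi}$ is a non-zero class, so its existence forces $b_1(\hat{\Gamma}_{\U_\xi})>0$, and conversely $b_1(\hat{\Gamma}_{\U_\xi})>0$ produces such a cycle; and being torsion is built into both the definition of an inner cyclic triplet and that of an inner-cyclic character. Putting these together yields the claimed identification: $(\A,\xi,\gamma)$ is an inner-cyclic arrangement (i.e.\ a realization of an inner cyclic triplet) exactly when $\xi$ is an inner-cyclic character on $E_\A$ and $\gamma$ is a cycle of $\hat{\Gamma}_{\U_\xi}$. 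The only delicate point, which I would flag as the main (mild) obstacle, is the asymmetry between line- and point-vertices: on a line-vertex of $\gamma$ one genuinely needs the extra condition~\eqref{Definition_InnerCyclic:3} to keep its neighbouring exceptional divisors unramified, whereas no analogous hypothesis on the lines meeting a neighbouring \emph{point} is required, since unramifiedness of $E_p$ propagates automatically through $\xi(p)=\prod_{\ell\in p}\xi(\ell)$; keeping this bookkeeping straight, together with the remark that the extra blow-ups at double points alter neither $\hat{\Gamma}_\A\equiv\Gamma_\C$ nor the character, is all the proof really needs.
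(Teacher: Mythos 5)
The paper states this Proposition without supplying a proof; it is treated as an unwinding of definitions, which is exactly what you do. Your argument is correct: you correctly identify $\hat{\Gamma}_\A$ with $\Gamma_\C$ via the blow-up (including the point that the extra blow-ups at double points are already built into the incidence-graph convention since $V_P(\C)$ includes all $p\in\P$ with $\sharp p\geq 2$), you translate conditions~\eqref{Definition_InnerCyclic:1}--\eqref{Definition_InnerCyclic:3} into the statement that every vertex of $\gamma$ is inner unramified, and you correctly observe the asymmetry that $\xi(p)=1$ is automatic at a point-vertex once $\xi(\ell)=1$ for all $\ell\in p$, whereas for a line-vertex the neighbouring condition~\eqref{Definition_InnerCyclic:3} is genuinely extra. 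The closing remark matching $b_1(\hat{\Gamma}_{\U_\xi})>0$ with the existence of a nontrivial (circular) cycle, and the shared torsion hypothesis, completes the identification. This is essentially the only approach; there is nothing the paper does that you have missed.
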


\begin{rmk}
	With this point of view, a nearby cycle is a cycle leaving in the boundary of a regular neighbourhood of the union of inner unramified components.
\end{rmk}

\subsection{Quasi-projective depth}\mbox{}

The depth of a torsion character is decomposed in two terms, the projective and  quasi-projective depth,
see~\cite{Art_Singularities, Gue_Thesis}. Summarizing, if $\xi$ is a torsion character of order~$n$, then
there is an $n$-fold unbranched ramified quasi-projective cover $\rho:E_\A^\xi\to E_\A$ where
$\sigma: X^\xi\to X^\xi$ generates the deck group of the cover. 
There is a natural isomorphism from $H^1(E_\A;\L_\xi)$ to the eigenspace of $H^1(X^\xi;\CC)$
for $\sigma$ with eigenvalue $e^{\frac{2 i\pi}{n}}$.

Let $\bar{\rho}:X^\xi\to\hat{\PP}^2$ be a smooth model of the projectivization of $\rho$,
where $\bar{\sigma}$ generates the deck group. The inclusion $j_N:E_\A^\xi\hookrightarrow X^\xi$
induces an injection
\begin{equation*}
	j_N^*:\HH^1(X^\xi;\CC) \rightarrow \HH^1(E_\A^\xi;\CC).
\end{equation*}
We denote by $j_{N,\xi}^*$ the restriction of $j_N^*$ 
to the eigenspaces for $\sigma,\bar{\sigma}$ with eigenvalue $e^{\frac{2 i\pi}{n}}$.

\begin{de}
	Let $\xi$ be a torsion character on $\p(E_\A)$.
The \emph{projective depth} of $\xi$ is $\dim\im j_{N,\xi}^*$ while
the \emph{quasi-projective depth} of $\xi$ is:
	\begin{equation*}
		\depth(\xi)=\dim \coker (j_{N,\xi}^*).
	\end{equation*}
\end{de} 

Note that there are known formulas for the computation of the projective depth given by Libgober~\cite{Lib_Characteristic}, 
see~\cite{Art_Combinatorics} for details. Moreover, there is a \emph{finite-time} algorithm to compute
this projective depth for any character. We explain now the method of~\cite{Art_Combinatorics}
to compute the quasi-projective depth. 

We are going to construct a matrix for a \emph{twisted} hermitian intersection $\cdot$ formulas in
a vector space having as base the elements of $\U_\xi$; for an arbitrary
order of this  basis we will consider a square matrix $A_\xi$ of size $\# \U_\xi$,
where coefficients are indexed by elements of $\U_\xi$.
This matrix will depend on the choice of a maximal tree $\T_{\U_\xi}$
of $\hat{\Gamma}_{\U_\xi}$ (maybe a maximal forest, since $\hat{\Gamma}_{\U_\xi}$
is non necessarily connected). For each (oriented) edge $e$ not in $\T_{\U_\xi}$
we consider a cycle $\gamma_e$ consisting on $e$ and a linear
chain of $\hat{\Gamma}_{\U_\xi}$ connecting the final end of $e$ with its starting point.
Let us denote:
\begin{equation*}
\chi(e):=
\begin{cases}
\mathcal{I}(\A,\xi,\gamma)&\text{ if }e\notin\T_{\U_\xi}\\
1&\text{ otherwise.}
\end{cases}
\end{equation*}
The coefficient associated to two components $E$ and $F$ is:
\begin{equation*}
\begin{cases}
E\cdot F&\text{ if }E=F\\
\sum_{e}\chi(e)&\text{ if } E\neq F.
\end{cases}
\end{equation*}
where the sum runs along all the oriented edges from $E$ to $F$.

\begin{thm}[\cite{Art_Singularities}]
	Let $\A$ be an arrangement, and let $\xi$ be a torsion character on $\p(E_\A)$ then:
	\begin{equation*}
		\depth(\xi)=\corank(A_\xi).
	\end{equation*}
\end{thm}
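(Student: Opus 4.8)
The plan is to identify $\depth(\xi)=\dim\coker(j_{N,\xi}^*)$ with the corank of a $\xi$-twisted hermitian form on the $\CC$-vector space spanned by the inner unramified components $\U_\xi$, and then to check that the Gram matrix of that form, in a suitable basis, is exactly $A_\xi$.

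First I would decompose the smooth model $X^\xi$ as the union of $E_\A^\xi=\bar\rho^{-1}(E_\A)$ and a closed regular neighbourhood $\mathcal{T}^\xi$ of $\bar\rho^{-1}(\bigcup\hat\A)$, glued along $B_\A^\xi:=\bar\rho^{-1}(B_\A)$. The Mayer--Vietoris sequence with $\CC$-coefficients is equivariant for the deck action, so passing to the $e^{2i\pi/n}$-eigenspaces places $j_{N,\xi}^*$ inside
\[
H^1(X^\xi)_\xi \longrightarrow H^1(E_\A^\xi)_\xi\oplus H^1(\mathcal{T}^\xi)_\xi \longrightarrow H^1(B_\A^\xi)_\xi \longrightarrow H^2(X^\xi)_\xi .
\]
A local analysis of the eigenspaces over the pieces $B_\ell^\xi$ and $B_p^\xi$ shows that only the inner unramified components contribute: around a component $H$ with $\xi(v_H)\neq 1$, or adjacent to such a component, the restricted cover is connected in the relevant directions and the corresponding $\xi$-eigenspace vanishes. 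Hence $H^1(\mathcal{T}^\xi)_\xi$ and the relevant part of $H^1(B_\A^\xi)_\xi$ are supported on a regular neighbourhood of $\bigcup_{H\in\U_\xi}H$, whose dual graph is $\hat\Gamma_{\U_\xi}$.

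The cokernel of $j_{N,\xi}^*$ then receives a description through the intersection form of $X^\xi$: by Lefschetz duality the obstruction lies in $H^2(\mathcal{T}^\xi,\partial\mathcal{T}^\xi)_\xi\cong H_2(\mathcal{T}^\xi)_\xi$, which is generated by the classes of the components $H\in\U_\xi$, and $\coker(j_{N,\xi}^*)$ is the kernel (equivalently, by hermitian symmetry, the cokernel) of the $\xi$-twisted intersection pairing on this space. The diagonal entry attached to $H\in\U_\xi$ is the ordinary self-intersection number $H\cdot H$ in $\hat\PP^2$, which is character-independent; the entry attached to $H\neq F$ is a sum, over the edges $e$ of $\hat\Gamma_{\U_\xi}$ joining them, of the equivariant intersection multiplicity. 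Fixing a maximal forest $\T_{\U_\xi}$ normalizes the equivariant lifts so that a tree edge contributes $1$, whereas an off-tree edge $e$ contributes the holonomy of $\L_\xi$ along the cycle $\gamma_e$, which by the construction in \S\ref{Section_Invariant} is precisely $\xi$ evaluated on a nearby cycle of $\gamma_e$, i.e. $\mathcal{I}(\A,\xi,\gamma_e)=\chi(e)$. This identifies the Gram matrix with $A_\xi$, so $\depth(\xi)=\dim\coker(j_{N,\xi}^*)=\corank(A_\xi)$.

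The main obstacle is the bookkeeping in the equivariant intersection computation. One must (i) verify that ramified and inner-ramified components contribute nothing to the $\xi$-eigenspaces of the neighbourhood and boundary pieces; (ii) control the dependence on the chosen lift of each $H\in\U_\xi$ to $X^\xi$ and show that the maximal forest normalization matches the case distinction defining $\chi(e)$ (this is where the non-connectedness of $\hat\Gamma_{\U_\xi}$, hence a forest rather than a tree, must be handled); and (iii) reconcile the two ingredients — the purely topological self-intersections on the diagonal and the character-theoretic holonomies off it — into one hermitian matrix. Once these points are settled, the equality $\dim\coker(j_{N,\xi}^*)=\corank(A_\xi)$ follows formally from the exact sequence together with the fact that a hermitian matrix and its form have the same corank.
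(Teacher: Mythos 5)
This theorem appears in the paper only as a citation of \cite{Art_Singularities}; the present text gives no proof, so there is nothing internal to compare your attempt against, and what follows reviews the proposal on its own terms. The overall strategy — split $X^\xi$ along $B_\A^\xi$ into $E_\A^\xi$ and a regular neighbourhood $\mathcal{T}^\xi$ of $\bar\rho^{-1}(\bigcup\hat{\A})$, pass to $e^{2i\pi/n}$-eigenspaces, and recast $\coker(j_{N,\xi}^*)$ as the radical of an equivariant intersection form whose Gram matrix is $A_\xi$ — is the natural one. Your identification of the off-tree edge contribution with $\mathcal{I}(\A,\xi,\gamma_e)$ is also correct: $\chi_{(\A,\xi)}(\tilde\gamma_e)=\xi(i_*(\tilde\gamma_e))$ is exactly the deck transformation accumulated along a nearby cycle, which is the sheet discrepancy between the forest-normalized lifts of the two components meeting along $e$. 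Likewise your point (i) is easy: for any component $H$ that is not inner unramified, the deck group acts on the connected components of $\bar\rho^{-1}(H)$ with stabilizers of index strictly less than $n$, so the $e^{2i\pi/n}$-eigenspace of $H_2$ over those pieces is zero.

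The genuine gap is the sentence passing from $\coker(j_{N,\xi}^*)$ to the radical of the pairing. The long exact sequence of the pair $(X^\xi,E_\A^\xi)$ plus excision and Lefschetz duality give $\coker(j_{N,\xi}^*)\cong\ker\bigl((i_{\mathcal{T}})_{*}:H_2(\mathcal{T}^\xi)_\xi\to H_2(X^\xi)_\xi\bigr)$, and one inclusion is free — anything in that kernel pairs to zero against all of $H_2(\mathcal{T}^\xi)_\xi$ — so you get $\depth(\xi)\leq\corank(A_\xi)$ for nothing. But the reverse inclusion, that a class orthogonal to the image of $H_2(\mathcal{T}^\xi)_\xi$ is already orthogonal to all of $H_2(X^\xi)_\xi$, is not formal. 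It amounts to showing that the image of $H_2(X^\xi)_\xi\to H_2(\mathcal{T}^\xi,\partial\mathcal{T}^\xi)_\xi$ lands in the image of the natural map from $H_2(\mathcal{T}^\xi)_\xi$, i.e.\ that no ``extra'' relative $2$-cycles enter from the exterior side; this is where a Hodge-theoretic input (or a substitute such as the Zariski lemma on the intersection form of a fibre, adapted equivariantly) must be invoked. Your sketch treats this as a consequence of ``hermitian symmetry'', which only gives the easy direction. Until that bridge is built the equality $\depth(\xi)=\corank(A_\xi)$ is not established.
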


The description of the inclusion map done in~\cite{FloGueMar} (see also Section~\ref{Section_Computability}) and the result on the invariant obtain in Section~\ref{Section_Invariant} allow to compute explicitly the quasi-projective depth of any torsion character.

\begin{ex}
	In the case of the arrangement $\mathscr{C}_7$ with the character $\xi$ defined in 
Example~\ref{Example_Invariant}, the matrix $A_\xi$ is:
	\begin{equation*}
		\left(
			\begin{array}{ccc}
				-1 & 1 & 1 \\
				1 & -1 & \chi_{(\mathscr{C}_7,\xi)}(\gamma)\\
				1 & \chi_{(\mathscr{C}_7,\xi)}(\gamma)\inv & -1
			\end{array}
		\right).
	\end{equation*}
	Example~\ref{Example_Invariant} implies that $\depth(\xi)=2$. Remark that this result is in harmony with the one obtained by D.~Cohen and A.~Suciu in~\cite{CohSuc_Characteristic}.
\end{ex}

\section{Computation of the invariant}\label{Section_Computability}

The definition of $\mathcal{I}(\A,\xi, \gamma)$ is quite clear, and most
probably it may have a more algebraic description. Nevertheless, its actual definition
is topological, and we need concrete models of the topology of $\A$ and more specifically
of the embedding $B_\A\hookrightarrow E_\A$ at a homology level, see~\cite{FloGueMar,Gue_Thesis}.

In this section we compute the invariant $\mathcal{I}(\A,\xi, \gamma) $ 
from a \emph{wiring diagram} of $\A$. 
As we can see below,
the wiring diagram is a $(3,1)$-dimensional model of the pair $(\PP^2,\A)$ which
contains all the multiple points of $\A$.
Given a cycle $\gamma$ in $\Gamma_\A$, we will use the diagram
 to construct a specific nearby cycle $\tilde{\gamma}$ and calculate its value by the caracter $\xi$. 
This gives a general method to compute the invariant from the equations of $\A$. 

Let us fix an arbitrary line $\ell_0\in\A$ which will
be considered as the line at infinity.

\begin{figure}[ht!]
	\centering
		\begin{tikzpicture}
	\begin{scope}[scale=0.75]
		\begin{scope}

			\ncross{0}{0};\ncross{1}{0};\ncross{2}{0};\ncross{3}{0};
			\ocross{0}{1};\ncross{2}{1};\ncross{3}{1};
			\ncross{0}{2};\mcross{1}{3}{2};
			\ucross{0}{3};\ncross{2}{3};\ncross{3}{3};
			\ncross{0}{4};\ucross{1}{4};\ncross{3}{4};
			\ncross{0}{5};\ocross{1}{5};\ncross{3}{5};
			\ocross{0}{6};\ncross{2}{6};\ncross{3}{6};
			\rcross{0}{7};\ncross{2}{7};\ncross{3}{7};
			\ncross{0}{8};\ucross{1}{8};\ncross{3}{8};
			\ncross{0}{9};\ncross{1}{9};\ucross{2}{9};
			\ncross{0}{10};\ncross{1}{10};\rcross{2}{10};
			\ncross{0}{11};\ocross{1}{11};\ncross{3}{11};
			\ncross{0}{12};\rcross{1}{12};\ncross{3}{12};
			\begin{scope}[shift={(0,0.5)}]
				\draw[fill=black!05] (-0.5,4) -- (-0.5,-0.5) -- (-2,-3.25) -- (-2,1.5) -- (-0.5,4);
				\node at (-0.85,2.5) {$\ell_0$};
			\end{scope}
			
			\draw[dashed] (2.5,-0.5) -- (2.5,3.5);
			\draw[dashed] (7.5,-0.5) -- (7.5,3.5);
			\draw[dashed] (10.5,-0.5) -- (10.5,3.5);
			\draw[dashed] (12.5,-0.5) -- (12.5,3.5);
			
			\draw (0.5,3.5) to[out=90,in=-90] (1.25,3.75) to[out=-90,in=90] (2,3.5);
			\node at (1.25,4.1) {$\beta_0$};
			\draw (3,3.5) to[out=90,in=-90] (5,3.75) to[out=-90,in=90] (7,3.5);
			\node at (5,4.1) {$\beta_1$};
			\draw (8,3.5) to[out=90,in=-90] (9,3.75) to[out=-90,in=90] (10,3.5);
			\node at (9,4.1) {$\beta_2$};
			\draw (11,3.5) to[out=90,in=-90] (11.5,3.75) to[out=-90,in=90] (12,3.5);
			\node at (11.5,4.1) {$\beta_3$};
			
			\node at (15,1.5) {$\CC^2$};
			\node at (15,-2) {$\CC$}; 
			\draw[->] (15,0.5) -- (15,-1);
		\end{scope}
		\begin{scope}[shift={(0,-0.25)}]
			\draw[smooth] (0,-2) node {$\bullet$} node[left] {$x_0$} to[out=0,in=180] (2.5,-2.5) node {$\bullet$} node[below] {$x_1$} to[out=0,in=180] (7.5,-1) node {$\bullet$} node[below] {$x_2$}  to[out=0,in=180] (10.5,-2.5) node {$\bullet$} node[below] {$x_3$} to[out=0,in=180] (12.5,-2) node {$\bullet$} node[below] {$x_4$}  to[out=0,in=180] (13,-2) node[right] {$\nu$};
			
			\draw (0,-0.75) -- (-0.5,-0.75) -- (-2,-3.5) -- (13,-3.5) -- (14.5,-0.75) -- (14,-0.75);
			\draw[dashed] (1,-0.75) -- (0,-0.75) ;
			\draw[dashed] (14,-0.75) -- (13,-0.75) ;
		\end{scope}
	\end{scope}
\end{tikzpicture}
		\caption{Decomposition of a wiring diagram.}
\label{Wiring_Decomposition}
\end{figure}
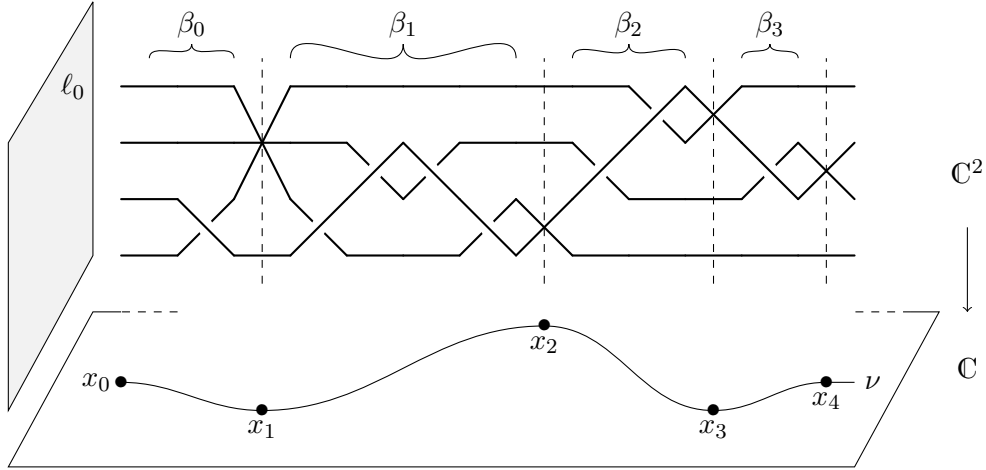

Consider the affine arrangement $\A_0:=\A \setminus\ell_{0}$ of $\CC^2\equiv\PP^2\setminus\ell_0$;
for a projective line $\ell\in\A$, $\ell\neq\ell_0$ we will denote by $L$
the corresponding affine line $\ell\setminus\ell_0$. 
Let $\pi : \CC^2\rightarrow \CC$ be a linear projection, 
\emph{generic} with respect to $\A$ in the sense that for all $i \in \{1,\dots,n \}$, 
the restriction of $\pi_{|L_i}$ is a homeomorphism. 
We choose the coordinates in $\CC^2$ such that $\pi$ is the first projection.
Suppose that the multiple points of $\A$ lie in  different fibers of $\pi$ and that their images $x_i$
satisify  
$\Re(x_1)<\cdots< \Re(x_k)$.

Consider a smooth path $\nu:[0,1]\rightarrow \CC$ whose image starts from a regular value 
$x_0\in\pi(\Tub(L_0))$ and passes through $x_1,\dots,x_k$ in order with $\nu(t_j)=x_j$.
	The \emph{braided wiring diagram} $W_\A$ associated to $\nu$ (voir~\cite{Arv}) is defined by:
	\begin{equation*}
		W_{\A}=\left\{(t,y)\in [0,1]\times\CC\mid\ (\nu(t),y)\in\bigcup\A_0 \right\}.
	\end{equation*}

The space
$W_{\A}\subset[0,1]\times\CC$ is a singular braid with $n$ strings labelled according to the lines, 
whose singular points correspond to the multiple points of $\A$. 
Let us fix a generic projection $\pi_{\RR}:\CC\to\RR$ such that the $n$ strings are generic
outside the singular points. 
For $u=0,\dots, k-1$, the wiring diagram over $(t_u,t_{u+1})$ is identified with a regular
braid $\beta_u$ in the braid group $\Braid_n$ with $n$~strands,  see Figure \ref{Wiring_Decomposition}.

Associated to any singular fiber $\pi^{-1}(t_u)$, containing the multiple point 
$P=  L_{p_1} \cap \dots \cap L_{p_{s(P)}}$, let $\tau_{u}$ be the local positive half-twist  between the strings 
$p_1,\dots, p_{s(P)}$ leaving
 straight the other strings (note that those strings are consecutive).
\begin{figure}[ht!]
	\centering
\begin{tikzpicture}[line cap=round,line join=round]
	\begin{scope}[scale=0.75]

		\begin{scope}[xscale=0.75]
			\draw[thick] (0,0) to[out=0,in=180] (7,3);
			\draw[line width=7pt,color=white] (0,1) to[out=0,in=180] (4.25,3) to[out=0,in=180] (6.25,2) -- (7,2);
			\draw[thick] (0,1) to[out=0,in=180] (4.25,3) to[out=0,in=180] (6.25,2) -- (7,2);
			\draw[line width=7pt,color=white] (0,2)  to[out=0,in=180] (2,3) to[out=0,in=180] (5,1) -- (7,1);
			\draw[thick] (0,2)  to[out=0,in=180] (2,3) to[out=0,in=180] (5,1) -- (7,1);
			\draw[line width=7pt,color=white] (0,3) to[out=0,in=180] (4,0) -- (7,0);
			\draw[thick] (0,3) to[out=0,in=180] (4,0) -- (7,0);
			\node[left] at (0,0) {$s_{p_4}$};
			\node[left] at (0,1) {$s_{p_3}$};
			\node[left] at (0,2) {$s_{p_2}$};
			\node[left] at (0,3) {$s_{p_1}$};
		\end{scope}
		
		\draw[<->] (-2.5,1.5) -- (-1.5,1.5);
		
		\begin{scope}[shift={(-6.5,0)}]
			\draw[thick] (0,0) -- (3,3);
			\draw[thick] (0,1) -- (3,2);
			\draw[thick] (0,2) -- (3,1);
			\draw[thick] (0,3) -- (3,0);
			\node[left] at (0,0) {$L_{p_4}$};
			\node[left] at (0,1) {$L_{p_3}$};
			\node[left] at (0,2) {$L_{p_2}$};
			\node[left] at (0,3) {$L_{p_1}$};
		\end{scope}
	\end{scope}
\end{tikzpicture}
		\caption{The half-twist associated to a singular point of multiplicity 4.}
\label{Correspondence_Point_Twist}
\end{figure}
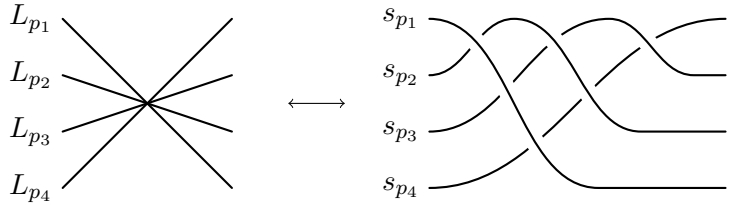

\noindent For all $u,v \in \{0,\dots,k\}$ with $u\neq v$, define
\begin{equation*}
	\beta_{u,v}=\left\{
	\begin{array}{ll}
	\beta_{v-1}\cdot\tau_{v-1}\cdot\beta_{v-2}\cdot\ldots\cdot\beta_{u+1}\cdot\tau_{u+1}\cdot\beta_u &\quad u<v\\
	(\beta_{v,u})\inv & \quad v<u
	\end{array}
	\right..
\end{equation*} 
The braid $\beta_{u,v}$ is obtained from $W_\A$ by taking 
the sub-singular braid bounded by the singular points $P_u$ and $P_v$ 
and replacing the singular crossings by the corresponding local half-twist $\tau$.
It should be noted that the braided wiring diagram encodes in 
fact the braid monodromy of $\A$ relative to the projection $\pi$. 
The operation of replacing singular crossings by half twists corresponds to a
particular choice of a geometric basis of $\pi_1(\CC \setminus \{x_1,\dots,x_k\})$, 
obtained from perturbations of the path $\nu$.

For any braid $\beta\in\Braid_n$ whose strings are labelled by $s_1,\dots, s_n$ and 
all $k,l =1,\dots,n$ with $k\neq l$, let $a_{k,l}(\beta) $ be defined as follows. 
If $p$ is a crossing of $\beta$ between $s_k$ and $s_l$, let $c_{k,l}(p)= 1$ if the string $s_k$ goes over the string $s_l$ and $0$ otherwise.  Let $w(p)$ be the sign of the crossing. Hence
\begin{equation*}
	a_{k,l}(\beta)= \Big(\sum\limits_p w(p) c_{k,l}(p) \Big) \cdot v_k \in H_1(E_\A),
\end{equation*}	
where the sum is taken over all the crossings of $\beta$ between the strings $s_k$ and $s_l$.
\begin{rmk}
The number $a_{k,l}(\beta)$ is the algebraic sum of crossings in $\beta$ between $s_k$ and $s_l$ where $s_k$ goes over $s_l$. 
\end{rmk}

The cycle $\gamma$ of $\Gamma_\A$ can be described by a cyclically ordered sequence of line-vertices 
$v_{\ell_{i_0}},\cdots,v_{\ell_{i_{r}}}$. Up to re-ordering the lines, suppose that $i_0=0$ 
(this will simplify the computation of the invariant, though it is not necessary). 
Let $j_q$ be the sequence of the indices of the point-vertices of $\gamma$, 
such that $\ell_{i_q}\cap\ell_{i_{q+1}} = P_{j_q}$.

Through the wiring diagram $W_\A$, the cycle $\gamma$ can be see directly in $\A$, in a unique way, 
by identifying its vertices 
to their corresponding wires between two multiple points.
Since $\ell_{i_0}=\ell_0$ is the line at infinity, we obtain
 a cycle in $W_\A$ relative to the fiber over $x_0$, uniquely defined. 
Indeed, one starts from a point in $\pi^{-1}(x_0)$, 
along the string labelled by $s_{i_1}$, we go to the singular point containing both $L_{i_1}$ and $L_{i_2}$; 
then along the string $s_{i_2}$, we go to the singular point containing both $L_{i_2}$ and $L_{i_3}$
and so on. We finish by going back to $\pi^{-1}(x_0)$ along the string $s_{i_r}$.

This process \emph{almost} produce an embedding $r(\gamma)$ as in Definition~\ref{def:embedding}.
Such an embedding can be obtained by avoiding the singular points in the wires
corresponding to points~$p$ not in the cycle. In order to construct
the nearby cycle~$\tilde{\gamma}$ we follow the procedure in \cite[Section 4]{FloGueMar}
where the \emph{pushed} cycle in the boundary manifold $B_\A$ is called  a \emph{framed cycle}.
If the wire of some $L_{i_j}$ pass through a point $p\neq P_{j_q}$, then the part of framed cycle
in $\partial\BB_p$ is in the boundary of a regular neighborhood of the trivial knot
$L_{i_j}\cap\partial\BB_p$ in $\partial\BB_p$. Hence the framed cycle $\tilde{\gamma}$ is a nearby cycle.

It is worth noticing that the way to push $\gamma$ off $\A$ depends on conventions, in particular around the
multiple points of $\A$ different from $P_{j_q}$. Different conventions might give another nearby cycle, but whose
  the image by the caracter $\xi$ depends only on $\gamma$.

The next step is to compute the image of $\tilde{\gamma}$ by $i_*$ induced by the inclusion map $B_\A \hookrightarrow E_\A$ at homology level. We use an abelian version of \cite[Theorem 4.3]{FloGueMar},
 given in Lemma \ref{Lemma_InclusionHomology}.

\begin{lem}\label{Lemma_InclusionHomology}
Let $s,t \in \{1,\cdots,n \}$ with $s \neq t$, and $\mathfrak{e}_{s,t}$ be the cycle of $\Gamma_\A$ defined by $v_{L_0},v_{L_s}$ and $v_{L_t}$.  Suppose that the intersection $L_{s} \cap L_t$ lies in the fiber $\pi^{-1}(x_u)$.
There exists a nearby cycle $\tilde{\mathfrak{e}}_{s,t} $ associated with $\mathfrak{e}_{s,t}$ such that
 the image by $i_*: H_1(B_\A) \rightarrow H_1(E_\A)$ induced by the inclusion is
	\begin{equation*}
		i_*(\tilde{\mathfrak{e}}_{s,t})=\sum\limits_{\substack{j=1,\\ j\neq s}}\limits^n a_{j,s}(\beta_{0,u}) + \sum\limits_{\substack{j=1,\\ j\neq t}}\limits^n a_{j,t}(\beta_{u,0})\in\HH_1(E_\A).
	\end{equation*}
\end{lem}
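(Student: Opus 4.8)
The plan is to unwind the topological construction of the framed (nearby) cycle $\tilde{\mathfrak{e}}_{s,t}$ described in the paragraphs preceding the statement and track its homology class through the inclusion $i_*:\HH_1(B_\A)\to\HH_1(E_\A)$ piece by piece, using the abelianization of \cite[Theorem~4.3]{FloGueMar}. First I would recall that the cycle $\mathfrak{e}_{s,t}$ visits the line-vertices $v_{L_0},v_{L_s},v_{L_t}$, so its embedding $r(\mathfrak{e}_{s,t})$ runs from the fiber over $x_0$ along the wire $s_s$ to the singular point $L_s\cap L_t$ in $\pi^{-1}(x_u)$, then back along the wire $s_t$ to the fiber over $x_0$; thus the framed cycle decomposes into a portion lying over the braid segment $\beta_{0,u}$ (tracking the wire $s_s$), a portion in $\partial\BB_{L_s\cap L_t}$, and a portion lying over $\beta_{u,0}$ (tracking the wire $s_t$), together with the boundary contributions of the line-neighborhood over $x_0$. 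The small piece inside $\partial\BB_{L_s\cap L_t}$ contributes nothing new in homology because $\mathfrak{e}_{s,t}$ is inner-cyclic at that point (this is exactly the freedom exploited in Lemma~\ref{Lemma_WellDefine}); similarly the pieces near the endpoints over $x_0$ cancel since the cycle is closed.

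The heart of the computation is then to express, for a wire $s_k$ pushed slightly off $\bigcup\A$ and carried along a braid $\beta$, the resulting $1$-cycle in $B_\A$ in terms of the meridians $v_j$. When the pushed copy of $s_k$ is dragged past a crossing of $\beta$ involving $s_k$ and another string $s_j$, it must either pass \emph{over} or \emph{under} the corresponding wire; in the over-case the loop of $\tilde{\mathfrak{e}}_{s,t}$ acquires (up to the sign $w(p)$ of the crossing) a meridian $v_k$ around $L_k$, precisely the quantity recorded by $c_{k,l}(p)$ and summed into $a_{j,k}(\beta)$. This is the abelian shadow of the wiring-diagram presentation of the peripheral map in \cite{FloGueMar}: their Theorem~4.3 gives the word in $\pi_1(E_\A)$, and abelianizing turns each conjugating subword into one of these $a_{j,k}$ counters. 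Summing over all strings $s_j$, $j\neq s$, that cross the wire $s_s$ along $\beta_{0,u}$ gives the first sum $\sum_{j\neq s}a_{j,s}(\beta_{0,u})$, and doing the same for $s_t$ along $\beta_{u,0}$ gives the second sum; note the asymmetry $\beta_{0,u}$ versus $\beta_{u,0}=(\beta_{0,u})^{-1}$ reflects that the return wire is traversed in the opposite direction, so the crossings are read with the orientation reversed.

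The main obstacle I anticipate is bookkeeping rather than conceptual: one must be careful that the chosen framing (the convention for pushing $\mathfrak{e}_{s,t}$ off $\A$, in particular the side on which it passes the non-involved multiple points and the non-involved strings) is the one for which the stated formula holds exactly, with no correction terms. Any other convention changes $\tilde{\mathfrak{e}}_{s,t}$ by meridians $v_\ell$ and $v_p$ that lie in $\ker\xi$ for an inner-cyclic character, which is harmless for the invariant $\mathcal{I}$, but here we are asserting an identity in $\HH_1(E_\A)$ itself, so the lemma must pin down the nearby cycle precisely. Thus the proof reduces to: (i) choosing the framed cycle as in \cite[Section~4]{FloGueMar}; (ii) citing the $\pi_1$-level formula of \cite[Theorem~4.3]{FloGueMar} for that cycle; (iii) abelianizing and identifying the resulting word with $\sum_{j\neq s}a_{j,s}(\beta_{0,u})+\sum_{j\neq t}a_{j,t}(\beta_{u,0})$ by the crossing-counting interpretation of $a_{k,l}$. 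I would carry out (iii) by following a single string through a single crossing to establish the local rule, then invoke additivity of homology classes under concatenation of braid segments.
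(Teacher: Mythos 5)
Your overall picture --- push $\mathfrak{e}_{s,t}$ off $\bigcup\A$ and read the meridians picked up from over-crossings of the other wires along the out-and-back path --- is the right one, and it matches the spirit of the paper's proof. However, there are two places where the argument as sketched would not close.

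First, your justification for discarding the piece of $\tilde{\mathfrak{e}}_{s,t}$ inside $\partial\BB_{L_s\cap L_t}$ (``because $\mathfrak{e}_{s,t}$ is inner-cyclic at that point'') is not available here. Lemma~\ref{Lemma_InclusionHomology} makes no inner-cyclicity hypothesis and asserts an exact identity in $\HH_1(E_\A)$, not an identity after applying a character; the freedom you invoke from Lemma~\ref{Lemma_WellDefine} lives in $\ker\xi$ and is therefore irrelevant. What actually makes the contribution at $L_s\cap L_t$ (and at every intermediate singular fiber $P_q$, $q<u$) tractable is the choice of nearby cycle allowed by the ``there exists'' in the statement, together with the \emph{unknotting map} $\delta$ of \cite[Proposition~4.1]{FloGueMar}. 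This is the second and more serious gap: you propose to abelianize \cite[Theorem~4.3]{FloGueMar} and then count crossings by a uniform local rule, ``one string through one crossing.'' But the paper does not do that, and for a reason. The braid $\beta_{0,u}$ interleaves ordinary braid segments $\beta_{q-1,q}$ with the half-twists $\tau_{P_q}$, and a framed cycle behaves differently near a multiple point than near a generic braid crossing: near $P_q$ it lives in $\partial\BB_{P_q}$ and its relation to the abstract half-twist $\tau_{P_q}$ is precisely what $\delta$ encodes. The paper therefore introduces the \emph{geometric} cycle $\mathcal{E}_{s,t}=\delta(\tilde{\mathfrak{e}}_{s,t})$, which bounds a $2$-cell with holes in $E_\A$, so its image under $i_*$ is an honest intersection count (giving the $\beta_{q-1,q}$-terms); the discrepancy $\tilde{\mathfrak{e}}_{s,t}-\mathcal{E}_{s,t}$ is a sum of meridians controlled by the $\tau_{P_q}$ (giving the $\tau$-terms); and only after reassembling $\beta_{0,u}=\beta_{u-1,u}\cdot\tau_{P_{u-1}}\cdots\tau_{P_1}\cdot\beta_{0,1}$ do the two contributions merge into your stated sums. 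Your sketch collapses these two distinct mechanisms into one and leaves unjustified why the count at the $\tau$'s obeys the same rule as at the $\beta$'s. Finally, a small slip: when the pushed copy of $s_k$ is carried under a string $s_j$ that over-crosses it, the meridian acquired is $v_j$ (around $L_j$), not $v_k$; this is consistent with the paper's convention $a_{j,k}(\beta)\in\ZZ\cdot v_j$, so your bookkeeping conclusion $a_{j,k}$ is still correct, but the intermediate sentence should be fixed.
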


We postpone the proof of Lemma \ref{Lemma_InclusionHomology} to the end of the section.
As a direct consequence, the image of $\tilde{\gamma}$ by the caracter $\xi$ is given in the next lemma.

\begin{propo}\label{Proposition_Computation}
 Let $\tilde{\gamma}$ be a nearby cycle for an inner cyclic realization $(\A,\xi, \gamma)$.
	With the previous notations, we have:
	\begin{equation*}
		\chi_{(\A,\xi)}(\tilde{\gamma}) =\xi\Big(\sum\limits_{q=1}\limits^{r} \sum\limits_{\substack{j=1,\\ j\neq i_q}}\limits^n a_{j,i_q}(\beta_{t_{q-1},t_{q}})\Big).
	\end{equation*}
\end{propo}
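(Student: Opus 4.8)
The plan is to decompose the nearby cycle $\tilde\gamma$ along the support of $\gamma$ and reduce to the three-line case already handled by Lemma~\ref{Lemma_InclusionHomology}. Recall that $\supp(\gamma)=\{\ell_{i_1},\dots,\ell_{i_r}\}$ with $\ell_{i_0}=\ell_0$ the line at infinity, and that the point-vertices of $\gamma$ are $P_{j_q}=\ell_{i_q}\cap\ell_{i_{q+1}}$, lying in the fibers over the $x_{t_q}$. The key observation is that $\gamma$, as a cycle in $\Gamma_\A$ passing through $v_{\ell_0}$, can be written homologically as the sum $\gamma=\sum_{q=1}^{r}\mathfrak{e}_{i_q,i_{q+1}}$ of the triangular cycles $v_{\ell_0}\to v_{P_{j_q}}\to\dots$; more precisely, each $\mathfrak{e}_{i_q,i_{q+1}}$ is the cycle through $v_{\ell_0}$, $v_{\ell_{i_q}}$ and $v_{\ell_{i_{q+1}}}$ via the point $P_{j_q}$, and the alternating way the $\ell_0$-segments cancel in telescoping fashion makes the total equal to $\gamma$ in $H_1(\Gamma_\A)$.

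First I would make precise the combinatorial identity $[\gamma]=\sum_{q=1}^r[\mathfrak{e}_{i_q,i_{q+1}}]$ in $H_1(\Gamma_\A)$, paying attention to orientations of the edges $e(\ell_{i_q},P_{j_q})$ so that the contributions through $v_{\ell_0}$ telescope to zero. Next, by Lemma~\ref{Lemma_WellDefine}, $\chi_{(\A,\xi)}$ applied to a nearby cycle depends only on the homology class of $\gamma$ in $\Gamma_\A$ (within the inner-cyclic constraints), so $\chi_{(\A,\xi)}(\tilde\gamma)$ may be computed additively from nearby cycles $\tilde{\mathfrak{e}}_{i_q,i_{q+1}}$ associated to the pieces: $i_*(\tilde\gamma)=\sum_q i_*(\tilde{\mathfrak{e}}_{i_q,i_{q+1}})$ up to terms in $\ker\xi$ — and here the inner-cyclic hypothesis is exactly what guarantees that the ``gluing'' corrections (meridians of lines through the $P_{j_q}$ and of the neighboring points) lie in $\ker\xi$, precisely as in the proof of Lemma~\ref{Lemma_WellDefine}. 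Then I would substitute the formula of Lemma~\ref{Lemma_InclusionHomology} for each piece, with $s=i_q$, $t=i_{q+1}$, and $u=t_q$, obtaining
\begin{equation*}
i_*(\tilde{\mathfrak{e}}_{i_q,i_{q+1}})=\sum_{\substack{j=1\\ j\neq i_q}}^n a_{j,i_q}(\beta_{0,t_q})+\sum_{\substack{j=1\\ j\neq i_{q+1}}}^n a_{j,i_{q+1}}(\beta_{t_q,0}).
\end{equation*}
Summing over $q$, the term $\sum_j a_{j,i_{q+1}}(\beta_{t_q,0})$ from the $q$-th piece and the term $\sum_j a_{j,i_{q+1}}(\beta_{0,t_{q+1}})$ from the $(q{+}1)$-th piece combine: since $a_{k,l}$ is additive under braid concatenation and $\beta_{t_q,0}\cdot\beta_{0,t_{q+1}}$ reassembles (up to the half-twists $\tau$ at the intermediate singular points, which do not contribute to the relevant $a_{j,i_{q+1}}$ as $i_{q+1}$ is not involved there) to $\beta_{t_q,t_{q+1}}$, we get $\sum_j a_{j,i_{q+1}}(\beta_{t_q,t_{q+1}})$. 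After relabeling the index $q+1\mapsto q$ and using the cyclic convention, the whole sum collapses to $\sum_{q=1}^r\sum_{j\neq i_q}a_{j,i_q}(\beta_{t_{q-1},t_q})$, and applying $\xi$ gives the claimed formula.

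The main obstacle I anticipate is the bookkeeping at the ``corners'' of the cycle: making the telescoping of the $\beta_{t_q,0}\cdot\beta_{0,t_{q+1}}$ into $\beta_{t_q,t_{q+1}}$ rigorous requires checking that the half-twist factors $\tau_v$ sitting between $x_{t_q}$ and $x_{t_{q+1}}$ contribute nothing to $a_{j,i_{q+1}}$ — this is true because at such a singular fiber the strand $s_{i_{q+1}}$ either is not one of the twisted strands, or if it is, the half-twist's crossings involving $s_{i_{q+1}}$ come in over/under pairs whose $c_{k,l}$-weighted sum vanishes; one must phrase this carefully, perhaps by choosing the embedding $r(\gamma)$ (as in Definition~\ref{def:embedding}) to avoid the singular points of the wires at points $p\notin\gamma$, exactly the ``avoiding'' already invoked in the construction of the framed cycle. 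A secondary but purely formal point is orientation consistency between the graph-homology decomposition and the geometric decomposition of $\tilde\gamma$ into arcs over the intervals $(t_{q-1},t_q)$; once the conventions are fixed at the outset this is routine.
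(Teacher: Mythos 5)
Your strategy is the same as the paper's: write $\gamma$ as a telescoping sum of the triangular cycles $\mathfrak{e}_{i_q,i_{q+1}}$ through $v_{\ell_0}$, feed each piece into Lemma~\ref{Lemma_InclusionHomology}, and sum, absorbing correction terms into $\ker\xi$ via the inner-cyclic hypothesis. One local slip is worth flagging: at the corner $P_{j_q}$ the composite $\beta_{t_q,0}\cdot\beta_{0,t_{q+1}}$ reassembles to $\beta_{t_q,t_{q+1}}$ only up to the single stray half-twist $\tau_{t_q}$, and its contribution $a_{j,i_{q+1}}(\tau_{t_q})$ does \emph{not} generically vanish --- a half-twist does not pair its crossings into cancelling over/under pairs as you suggest. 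The paper records this as an explicit term $\theta'_{t_q}$ and observes that every line contributing to it passes through the point-vertex $P_{j_q}$ of $\gamma$, hence lies in $\ker\xi$ by Definition~\ref{Definition_InnerCyclic}\eqref{Definition_InnerCyclic:2}; this is exactly the same absorption you already invoke for the ``gluing corrections'', so the argument closes once the incorrect ``vanishes identically'' is replaced by ``lies in $\ker\xi$ by the inner-cyclic hypothesis''.
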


\begin{proof}
 For all $i,j =1,\cdots,n$ with $i \neq j$, consider the cycles $\mathfrak{e}_{i,j}$  given by the sequence $v_{L_0},v_{L_i},v_{L_j}$, as in Lemma \ref{Lemma_InclusionHomology}. They generate $H_1(\Gamma_\A)$
 and $\gamma$ can be decomposed as
$ \gamma =\mathfrak{e}_{i_1,i_2} + \mathfrak{e}_{i_2,i_3} + \cdots + \mathfrak{e}_{i_{r-1},i_r}$.
For any choice of nearby cycles $\tilde{\mathfrak{e}}_{i,j}$ associated with $\mathfrak{e}_{i,j}$,  the sum $\tilde{\gamma}=\tilde{\mathfrak{e}}_{i_1,i_2} +\cdots +\tilde{\mathfrak{e}}_{i_{r-1},i_r}$ is a nearby cycle associated with $\gamma$. Remark that Lemma~\ref{Lemma_WellDefine} implies that the image of $i_*(\tilde{\gamma})$ by $\xi$ does not depend on the choice of $\tilde{\gamma}$.
From Lemma~\ref{Lemma_InclusionHomology}, we have:
\begin{equation*}
	\xi\circ i_*(\tilde{\mathfrak{e}}_{i_q,i_{q+1}})=  \xi\Big(\sum\limits_{\substack{j=1,\\ j\neq i_q}}\limits^n  a_{j,i_q}(\beta_{0,t_{q}})  + \sum\limits_{\substack{j=1,\\ j\neq i_{q+1}}}\limits^n  a_{j,i_{q+1}}(\beta_{t_{q},0})\Big).
\end{equation*}
 Since $\beta_{0,t_q}$ and $\beta_{0,t_{q+1}}$ are equal along $\beta_{0,\min(t_q,t_{q+1})}$, then we have:
\begin{equation*}
	\sum\limits_{\substack{j=1,\\ j\neq i_{q+1}}}\limits^n a_{j,i_{q+1}}(\beta_{t_{q},0})
	+\sum\limits_{\substack{j=1,\\ j\neq i_{q+1}}}\limits^n a_{j,i_{q+1}}(\beta_{0,t_{q+1}}) = 
	\sum\limits_{\substack{j=1,\\ j\neq i_{q+1}}}\limits^n a_{j,i_{q+1}}(\beta_{t_{q},t_{q+1}}) + 
	\sum\limits_{q=0}\limits^m \theta_{i_q,i_{q+1}} +  \theta'_{t_q},
\end{equation*}
where $\theta'_{t_q}$ is a product of meridians associated with lines in the support (and then in $\ker(\xi)$) coming from the change of wires at the point $P_{t_q}$. Finally, we have:
\begin{equation*}
	i_*(\tilde{\gamma})=\sum\limits_{q=1}\limits^r \sum\limits_{\substack{j=1,\\ j\neq i_q}}\limits^n a_{j,i_q}(\beta_{t_q,t_{q+1}}) +
	\Theta,
\end{equation*}
where $\Theta$ is an element of $\ker(\xi)$.
\end{proof}

\begin{proof}[Proof of Lemma{\rm~\ref{Lemma_InclusionHomology}}]
As mentioned above, we \emph{push}
a model of $\mathfrak{e}_{s,t}$ from $\A$ to the boundary manifold $B_\A$. 
The cycle  obtained is a \emph{framed} cycle. 
To compute the image of this particular nearby cycle, 
we express it as a product of meridians and a \emph{geometric} cycle $\mathcal{E}_{s,t}$.
 
By construction, geometric cycles bound $2$-cells with holes in the exterior $E_\A$ and counting intersections allows a direct computation of $i_* : H_1(B_\A) \rightarrow H_1(E_\A)$ in this basis. The previous expression can be obtain, by \cite[Proposition 4.1]{FloGueMar}, using the unknotting map $\delta$ (sending framed cycle on the corresponding geometric cycle).
The contribution of each singular point $P$ correspond to the contribution of the strings overcrossing the string $s$ and the string $t$ in $\tau_P$, then an abelian computation shows that:
\begin{equation*}
\delta(\tilde{\mathfrak{e}}_{s,t}))=\mathcal{E}_{s,t}=
-\sum\limits_{q=1}\limits^{m-1}\Big( \sum\limits_{\substack{j=1,\\ j\neq s}}\limits^n a_{j,s}(\tau_{P_q}) \Big)
+ \tilde{\mathfrak{e}}_{s,t}
+ \sum\limits_{q=1}\limits^{m-1}\Big( \sum\limits_{\substack{j=1,\\ j\neq t}}\limits^n a_{j,t}(\tau_{P_q}) \Big) .
\end{equation*}

The model of $\mathfrak{e}_{s,t}$ in $W_\A$ is composed of two parts: a string labelled $s$ from $\pi^{-1}(x_0)$ to $L_{s} \cap L_t \in\pi^{-1}(x_u)$, and a string labelled $t$ from $\pi^{-1}(x_u)$ to $\pi^{-1}(x_0)$.
Let $S_s$ (resp. $S_t$)  be the set of arcs that go over the string $s$ (resp. $t$) 
between $\pi\inv(x_0)$ and $\pi\inv(x_u)$. For each arc $\varsigma$ in $S_s$ or $S_t$, 
let $\sgn(\varsigma)\in\set{\pm 1}$ be the sign of the base $\set{\varsigma,\mathfrak{e}_{s,t}}$ 
($\varsigma$ is oriented from left to right in $W_\A$)
where $\varsigma$ and $\mathfrak{e}_{s,t}$ are considered as vectors in $\RR^2$. 
Let $v_\varsigma\in H_1(E_\A)$ be the meridian of the line corresponding to $\varsigma$.
Once again, an abelian computation shows that:
	\begin{equation*}
		i_*(\mathcal{E}_{s,t})=\mu_{s,t}=\sum\limits_{\varsigma\in S_s} \sgn(\varsigma) v_\varsigma + \sum\limits_{\varsigma\in S_t} \sgn(\varsigma) v_\varsigma =
	\sum\limits_{q=1}\limits^u \sum\limits_{\substack{j=1,\\ j\neq s}}\limits^n a_{j,s}(\beta_{q-1,q}) 
	+ \sum\limits_{q=1}\limits^u \sum\limits_{\substack{j=1,\\ j\neq t}}\limits^n a_{j,t}(\beta_{q-1,q}\inv).
\end{equation*}
Finally, we obtain
\begin{align*}
	i_*(\tilde{\mathfrak{e}}_{s,t}) 
	& =  
		\sum\limits_{q=1}\limits^{u-1}\Big( \sum\limits_{\substack{j=1,\\ j\neq s}}\limits^n a_{j,s}(\tau_{P_q}) \Big) + 
		\sum\limits_{q=1}\limits^u \Big( \sum\limits_{\substack{j=1,\\ j\neq s}}\limits^n a_{j,s}(\beta_{q-1,q}) \Big) +
		\sum\limits_{q=1}\limits^u \Big( \sum\limits_{\substack{j=1,\\ j\neq t}}\limits^n a_{j,t}(\beta_{q-1,q}\inv) \Big) +
		\sum\limits_{q=1}\limits^{u-1}\Big( \sum\limits_{\substack{j=1,\\ j\neq t}}\limits^n a_{j,t}(\tau_{P_q}\inv) \Big),\\
	& = 
		\sum\limits_{\substack{j=1,\\ j\neq s}}\limits^n   a_{j,s}\big( \beta_{0,1} \cdot \tau_{P_1} \cdots \tau_{P_{u-1}} \cdot \beta_{u-1,u}
		\big)  +
		\sum\limits_{\substack{j=1,\\ j\neq s}}\limits^n   a_{j,t}\big(  \beta_{u-1,u}\inv \cdot \tau_{P_{u-1}}\inv \cdots \tau_{P_1}\inv  \cdot \beta_{i_{0},i_1} \inv
		\big),\\
		& =
			\sum\limits_{\substack{j=1,\\ j\neq s}}\limits^n a_{j,s}(\beta_{0,u}) + \sum\limits_{\substack{j=1,\\ j\neq t}}\limits^n a_{j,t}(\beta_{u,0}).
\qedhere
\end{align*}
\end{proof}

\section{Example}\label{Section_Examples}

The MacLane arrangements are two conjugated arrangements coming from MacLane's matroid \cite{Mac}. 
It is the arrangement with a minimal number of lines such that the combinatorics admits a 
realization over $\CC$ but not over $\RR$ (see \cite[Example 6.6.2(3)]{matroids}). 
These arrangements are constructed as follows. 
Let us consider the 2-dimensional vector space on the field $\FF_3$ of three elements. 
Such a plane contains 9 points and 12 lines, 4 of them pass through the origin. 
Let $\L$ be $\FF_3^2\setminus\set{(0,0)}$ and $\P$
the set of lines in $\FF_3^2$. This provides a line combinatorics $(\L,\P,\Subset)$, 
where for all $\ell\in\L, P\in\P$, we have $P\Subset \ell \Leftrightarrow (\ell \in P, \text{ in }\FF^2_3)$. 
Figure~\ref{MacLane_Combinatorics} represents the ordered MacLane's combinatorics viewed in 
$\FF_3^2$. As an ordered combinatorics, it admits two ordered complex realizations. 

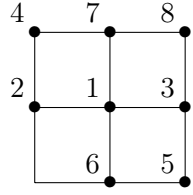
\begin{figure}[ht!]
	\centering
\begin{tikzpicture}
	\begin{scope}
		\node (P2) at (1,0) {$\bullet$};
		\node (P3) at (2,0) {$\bullet$};
		\node (P5) at (0,1) {$\bullet$};
		\node (P0) at (1,1) {$\bullet$};
		\node (P6) at (2,1) {$\bullet$};
		\node (P4) at (0,2) {$\bullet$};
		\node (P1) at (1,2) {$\bullet$};
		\node (P7) at (2,2) {$\bullet$};
		\node[above left] at (1,0) {6};
		\node[above left] at (2,0) {5};
		\node[above left] at (0,1) {2};
		\node[above left] at (1,1) {1};
		\node[above left] at (2,1) {3};
		\node[above left] at (0,2) {4};
		\node[above left] at (1,2) {7};
		\node[above left] at (2,2) {8};
		\draw (0,0) -- (P4.center) -- (P7.center) -- (P3.center) -- (0,0);
		\draw (P2.center) -- (P1.center);
		\draw (P5.center) -- (P6.center);
	\end{scope}
\end{tikzpicture}
		\caption{Ordered MacLane combinatorics, viewed in $\FF_3^2$\label{MacLane_Combinatorics}}
\end{figure}

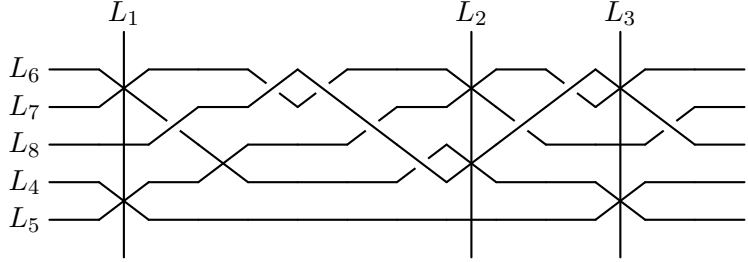
\begin{figure}[ht!]
	\centering
\begin{tikzpicture}	
	\begin{scope}[yscale=0.5,xscale=0.66]
		\ncross{0}{-1};\ncross{1}{-1};\ncross{2}{-1};\ncross{3}{-1};\ncross{4}{-1};
		\rcross{0}{0};\ncross{2}{0};\rcross{3}{0};
		\ncross{0}{1};\ncross{1}{1};\ucross{2}{1};\ncross{4}{1}
		\ncross{0}{2};\rcross{1}{2};\ncross{3}{2};\ncross{4}{2};
		\ncross{0}{3};\ncross{1}{3};\ncross{2}{3};\ucross{3}{3};
		\ncross{0}{4};\ncross{1}{4};\ncross{2}{4};\ocross{3}{4};
		\ncross{0}{5};\ncross{1}{5};\ocross{2}{5};\ncross{4}{5};
		\ncross{0}{6};\ocross{1}{6};\ncross{3}{6};\ncross{4}{6};
		\ncross{0}{7};\rcross{1}{7};\rcross{3}{7};
		\ncross{0}{8};\ncross{1}{8};\ucross{2}{8};\ncross{4}{8};
		\ncross{0}{9};\ncross{1}{9};\ncross{2}{9};\ucross{3}{9};
		\rcross{0}{10};\ncross{2}{10};\rcross{3}{10};
		\ncross{0}{11};\ncross{1}{11};\ocross{2}{11};\ncross{4}{11};
		\ncross{0}{12};\ncross{1}{12};\ncross{2}{12};\ncross{3}{12};\ncross{4}{12};
		\draw[thick,cap=round] (0.5,5) -- (0.5,-1);
		\draw[thick,cap=round] (7.5,5) -- (7.5,-1);
		\draw[thick,cap=round] (10.5,5) -- (10.5,-1);
		\node at (-1.5,0) {$L_5$};	
		\node at (-1.5,1) {$L_4$};	
		\node at (-1.5,2) {$L_8$};	
		\node at (-1.5,3) {$L_7$};	
		\node at (-1.5,4) {$L_6$};	
		\node at (7.5,5.5) {$L_2$};	
		\node at (10.5,5.5) {$L_3$};	
		\node at (0.5,5.5) {$L_1$};
	\end{scope}
\end{tikzpicture}
		\caption{Wiring diagram of extended MacLane arrangement\label{Wiring_Extended_MacLane_Positive}}
\end{figure}
We can give equations to the realizations:
\begin{gather*}
L_1:y-\zeta z=0,\ L_2:y-z=0,\ L_3:y-\bar{\zeta} z=0,\
L_4:x-z=0,\\ L_5:x-\bar{\zeta} y=0,\
L_6:x-\zeta y=0,\ L_7:x-\bar{\zeta} z=0,\ L_8:x-\zeta z=0,
\end{gather*}
where $\zeta$ is a primitive cubic root of unity (its choice determine the realization).
Add to MacLane arrangements a line $L_0$ passing through the intersection points: 
$L_1\cap L_2\cap L_3$ and $L_4\cap L_7\cap L_8$, i.e., $L_0:z=0$ in the above equations. 
We obtain two ordered realization denoted by $\M^+$ and $\M^-$ 
and called respectively positive and negative extended MacLane arrangement 
(the wiring diagram of $\M^+$ is pictured in Figure~\ref{Wiring_Extended_MacLane_Positive}). 
It is not hard to see that the only inner-cyclic characters are $\xi$ and $\xi^{-1}$
where $\xi$ is defined by:
\begin{equation*}
	(v_0,v_1,\dots,v_8)\longmapsto (1,\zeta,\zeta,\zeta,\zeta^2,1,1,\zeta^2,\zeta^2),
\end{equation*}
with corresponding cycle $\gamma:v_{L_0}\rightarrow v_{P_{0,6}}\rightarrow v_{L_6}\rightarrow v_{P_{5,6}}\rightarrow v_{L_5}\rightarrow v_{P_{0,5}}\rightarrow v_{L_0}$ in $\Gamma_{\M^+}$. With the notation of Section~\ref{Section_Computability}, the cycle $\tilde{\mathfrak{e}}_{6,5}^\pm$ is a nearby cycle associated with $\gamma$. In the positive case, the braid $\beta^+\in\Braid_8$ associated with $\tilde{\mathfrak{e}}_{6,5}^+$  is $(\sigma_5\sigma_4\sigma_5)\sigma_3(\sigma_2\sigma_1\sigma_2)\sigma_4\inv$, while in the negative $\beta^-=(\sigma_5\sigma_4\sigma_5)\sigma_3(\sigma_2\sigma_1\sigma_2)\sigma_4$. Then using Proposition~\ref{Proposition_Computation}, the images in the complement of the nearby cycle in both cases are:
\begin{equation*}
	\begin{array}{c}
		i_*(\tilde{\mathfrak{e}}_{6,5}^+)=\sum\limits_{\substack{j=1,\\ j\neq 6}}\limits^8 a_{j,6}(\beta^+) + \sum\limits_{\substack{j=1,\\ j\neq 5}}\limits^8 a_{j,5}(\left(\beta^+\right)\inv)=(v_1-v_8)+(-v_4-v_1)=-v_8-v_4, 
		\\
		i_*(\tilde{\mathfrak{e}}_{6,5}^-)=\sum\limits_{\substack{j=1,\\ j\neq 6}}\limits^8 a_{j,6}(\beta^-) + \sum\limits_{\substack{j=1,\\ j\neq 5}}\limits^8 a_{j,5}(\left(\beta^-\right)\inv)=v_1+(-v_4-v_1)=-v_4.
	\end{array}
\end{equation*} 

Finally, we obtain that :
\begin{equation*}
	\begin{array}{ccc}
		\mathcal{I}(\M^+,\xi,\gamma)=\zeta^2, 
		& \quad & 
		\mathcal{I}(\M^-,\xi,\gamma)(\gamma)=\zeta.
	\end{array}
\end{equation*} 

As a consequence of this paper there is no homeomorphism $\phi:(\PP^2,\M^+)\to(\PP^2,\M^-)$
preserving orders and orientations of the lines. Note that it is a consequence that
this result is already true for the MacLane arrangements;
the fact that MacLane arrangements satisify this property is done using the techniques
in~\cite{ArtCarCogMar_Invariants,Ryb,ry:11}) and the invariant in this paper does not provide
an obstruction.


\providecommand{\bysame}{\leavevmode\hbox to3em{\hrulefill}\thinspace}
\providecommand{\MR}{\relax\ifhmode\unskip\space\fi MR }
\providecommand{\MRhref}[2]{%
  \href{http://www.ams.org/mathscinet-getitem?mr=#1}{#2}
}
\providecommand{\href}[2]{#2}

\end{document}